\newtheorem{Theorem}{Theorem}[section]
\newtheorem{Lemma}[Theorem]{Lemma}
\theoremstyle{definition}
\theoremstyle{remark}
\newtheorem{Remark}[Theorem]{Remark}
\def\@thmcountersep{-}
\numberwithin{equation}{section}
\begin{document} 

\title[$C_{n}$-moves and the difference of Jones polynomials for links]{$C_{n}$-moves and the difference of Jones polynomials for links}

\author{Ryo Nikkuni}
\address{Department of Mathematics, School of Arts and Sciences, Tokyo Woman's Christian University, 2-6-1 Zempukuji, Suginami-ku, Tokyo 167-8585, Japan}
\email{nick@lab.twcu.ac.jp}
\thanks{The author was supported by JSPS KAKENHI Grant Number 15K04881.}


\subjclass{Primary 57M25}

\date{}


\keywords{Jones polynomial, Vassiliev invariant, $C_{n}$-move}

\begin{abstract}
The Jones polynomial $V_{L}(t)$ for an oriented link $L$ is a one-variable Laurent polynomial link invariant discovered by Jones. For any integer $n\ge 3$, we show that: (1) the difference of Jones polynomials for two oriented links which are $C_{n}$-equivalent is divisible by $\left(t-1\right)^{n}\left(t^{2}+t+1\right)\left(t^{2}+1\right)$, and (2) there exists a pair of two oriented knots which are $C_{n}$-equivalent such that the difference of the Jones polynomials for them equals $\left(t-1\right)^{n}\left(t^{2}+t+1\right)\left(t^{2}+1\right)$. 
\end{abstract}

\maketitle

\section{Introduction} 

The {\it Jones polynomial} $V_{L}(t)\in {\mathbb Z}\left[t^{\pm 1/2}\right]$ is an integral Laurent polynomial link invariant for an oriented link $L$ defined by the following formulae: 
\begin{eqnarray*}
V_{O}(t) &=&1,\\
t^{-1}V_{L_{+}}(t)-tV_{L_{-}}(t) &=& \left(t^{\frac{1}{2}}-t^{-\frac{1}{2}}\right)V_{L_{0}}(t), 
\end{eqnarray*}
where $O$ denotes the trivial knot and $L_{+}$, $L_{-}$ and $L_{0}$ are oriented links which are identical except inside the depicted regions as illustrated in Fig. \ref{skein} \cite{Jones87}. The triple of oriented links $\left(L_{+},L_{-},L_{0}\right)$ is called a {\it skein triple}. Jones also showed the following property of the Jones polynomials for oriented knots. 

\begin{Theorem}\label{jw}{\rm (Jones \cite[Proposition 12.5]{Jones87})} 
For any two oriented knots $J$ and $K$, $V_{J}(t)-V_{K}(t)$ is divisible by $\left(t-1\right)^{2}\left(t^{2}+t+1\right)$.
\end{Theorem}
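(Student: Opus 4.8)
The plan is to reduce the divisibility to three evaluation facts that hold uniformly for every oriented knot, namely $V_K(1)=1$, $V_K'(1)=0$, and $V_K(\omega)=1$, where $\omega=e^{2\pi i/3}$ is a primitive cube root of unity. Granting these, set $f(t)=V_J(t)-V_K(t)$. Since $J$ and $K$ are knots, their Jones polynomials lie in $\mathbb{Z}[t^{\pm 1}]$ (only integer powers of $t$ occur for a one-component link), so $f\in\mathbb{Z}[t^{\pm 1}]$. The three facts give $f(1)=0$, $f'(1)=0$, and $f(\omega)=0$; taking complex conjugates (the coefficients of $f$ are real integers) also gives $f(\bar\omega)=0$. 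Thus $f$ has a zero of order at least $2$ at $t=1$ and simple zeros at $\omega,\bar\omega$, so both $(t-1)^2$ and $(t-\omega)(t-\bar\omega)=t^2+t+1$ divide $f$; being coprime, their product $(t-1)^2(t^2+t+1)$ divides $f$.

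First I would record the two relevant special values as functions of the number of components $c$, proved together by the standard skein induction: $V_L(1)=(-2)^{c-1}$ and $V_L(\omega)=(-1)^{c-1}$. The base case is the $c$-component unlink, where $V=(-t^{1/2}-t^{-1/2})^{c-1}$ evaluates to $(-2)^{c-1}$ at $t=1$ and, using $-(\omega^{1/2}+\omega^{-1/2})=-2\cos(\pi/3)=-1$, to $(-1)^{c-1}$ at $\omega$. For the value at $t=1$ one notes the skein relation degenerates to $V_{L_+}(1)=V_{L_-}(1)$, so $V(1)$ is invariant under crossing changes, and any link is carried to the $c$-component unlink by crossing changes without altering $c$. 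For the value at $\omega$ one instead checks that the relation $\omega^{-1}V_{L_+}(\omega)-\omega V_{L_-}(\omega)=(\omega^{1/2}-\omega^{-1/2})V_{L_0}(\omega)$ is consistent with the proposed formula: since smoothing changes the component count by exactly one, both the self-crossing and two-component cases reduce to the single numerical identity $\omega^{-1}-\omega=\omega^{-1/2}-\omega^{1/2}\,(=-i\sqrt{3})$, which holds. Specializing to $c=1$ gives $V_K(1)=1$ and $V_K(\omega)=1$.

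Next I would obtain $V_K'(1)=0$ by differentiating the defining relation $t^{-1}V_{L_+}(t)-tV_{L_-}(t)=(t^{1/2}-t^{-1/2})V_{L_0}(t)$ and evaluating at $t=1$. For a self-crossing of a knot, $L_+$ and $L_-$ are knots and $L_0$ is a two-component link, so substituting $V_{L_\pm}(1)=1$ and $V_{L_0}(1)=-2$ makes the differentiated relation collapse to $V_{L_+}'(1)-V_{L_-}'(1)=0$. Hence $V'(1)$ is unchanged by a crossing change; since any knot is transformed into the unknot by finitely many crossing changes and $V_O\equiv 1$ has $V_O'(1)=0$, we conclude $V_K'(1)=0$ for every knot.

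Finally I would confirm integrality of the quotient. Writing $f(t)=t^{-N}F(t)$ with $F\in\mathbb{Z}[t]$, the divisor $D(t)=(t-1)^2(t^2+t+1)$ is monic with integer coefficients, so division with remainder in $\mathbb{Z}[t]$ yields $F=DQ+R$ with $Q,R\in\mathbb{Z}[t]$ and $\deg R<\deg D$; because $D\mid F$ in $\mathbb{C}[t]$, uniqueness of the remainder forces $R=0$, whence $Q\in\mathbb{Z}[t]$ and $f=D\cdot(t^{-N}Q)$ exhibits the divisibility inside $\mathbb{Z}[t^{\pm 1}]$. I expect the main obstacle to be the cube-root evaluation $V_K(\omega)=1$: unlike the case $t=1$, the value at $\omega$ is genuinely not invariant under a single crossing change, since the smoothed term $V_{L_0}(\omega)$ does not drop out of the skein relation. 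It must therefore be controlled by tracking the component parity through the inductive formula $V_L(\omega)=(-1)^{c-1}$ rather than by a naive crossing-change argument, and verifying that this induction is valid is the delicate point of the proof.
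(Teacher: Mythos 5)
Your proposal is correct, and it rests on the same three ingredients that underlie the paper's treatment --- the evaluations $V_K(1)=1$ and $V_K(\omega)=1$ (where $\omega=e^{2\pi\sqrt{-1}/3}$) together with the vanishing $V_K^{(1)}(1)=0$ --- but it reaches them by a more elementary, self-contained route. The paper never reproves these facts: it quotes them from Jones (Lemma \ref{vlem} (1),(2), and the formula for $V_L^{(1)}(1)$ recalled in the Remark after Lemma \ref{mainlem1}), and it prefers to present Theorem \ref{jw} as a corollary of Theorem \ref{divisible} (1), using that any two knots are $C_2$-equivalent \cite{MN89} and that $V^{(1)}(1)$ is a Vassiliev invariant of order $\le 1$. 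The step converting the three facts into divisibility is the content of Lemma \ref{mainlem1}: write the difference as $\left(t^{3}-1\right)g(t)$, differentiate at $t=1$, and conclude $g(1)=0$; this is your zero-counting-plus-monic-division argument in slightly different clothing (and your explicit division-with-remainder step is a welcome justification of the integrality that the paper leaves implicit). By contrast, you derive the evaluations directly by skein induction: crossing changes to the unlink for $t=1$ and for $V'(1)$, and for $t=\omega$ the identity $\omega^{-1}-\omega=\omega^{-1/2}-\omega^{1/2}$, correctly observing that smoothing always changes the component count by exactly one so that both crossing types reduce to the same identity. What you gain is a proof requiring no citations and no Vassiliev/$C_n$ machinery; what the paper's route gains is uniformity, since the same lemma and machinery immediately yield the statements for links and for all higher $C_n$-equivalences, which is the paper's real goal. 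The one place your sketch still needs scaffolding is the evaluation at $\omega$: as you note yourself, the consistency check becomes a proof only when run through the standard double induction (on crossing number, and on the number of crossing switches needed to reach a descending, hence unlink, diagram); the identity you computed is precisely the inductive step, so the remaining gap is routine bookkeeping, but it should be written out.
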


On the basis of Theorem \ref{jw}, for an oriented knot $K$, Jones called the polynomial $W_{K}(t) = \left\{1-V_{K}(t)\right\} / \left(t-1\right)^{2}\left(t^{2}+t+1\right)$ a {\it simplified polynomial} and made a table of the simplified polynomials for knots up to $10$ crossings \cite{Jones87}. In particular, if $K$ is the right-handed trefoil knot then $W_{K}(t) = 1$. So the polynomial $\left(t-1\right)^{2}\left(t^{2}+t+1\right)$ is maximal as a divisor of the difference of Jones polynomials of any pair of two oriented knots.

\begin{figure}[htbp]
      \begin{center}
\scalebox{0.4}{\includegraphics*{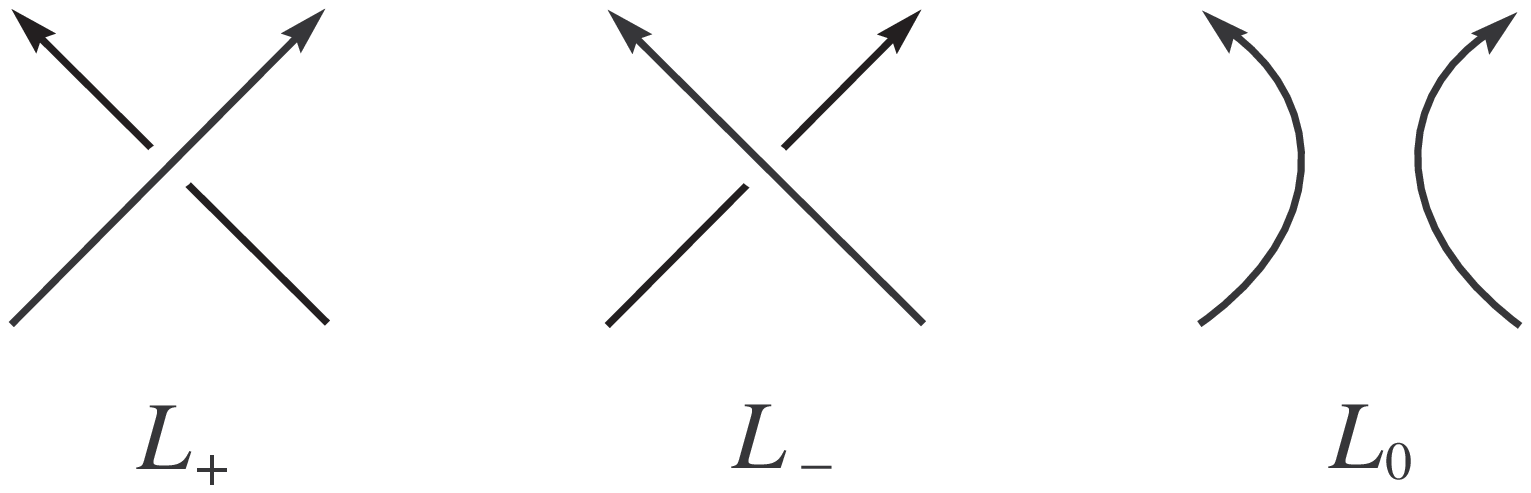}}
      \end{center}
   \caption{Skein triple $\left(L_{+},L_{-},L_{0}\right)$}
  \label{skein}
\end{figure} 

Our purpose in this paper is to examine the difference of Jones polynomials for two oriented links which are {\it $C_{n}$-equivalent}, where a $C_{n}$-equivalence is an equivalence relation on oriented links introduced by Habiro \cite{Habiro94} and Gusarov \cite{Gusarov00} independently as follows. For a positive integer $n$, a {\it $C_{n}$-move} is a local move on oriented links as illustrated in Fig. \ref{Cnmove} if $n\ge 2$, and a $C_{1}$-move is a crossing change. Two oriented links are said to be {\it $C_{n}$-equivalent} if they are transformed into each other by $C_{n}$-moves and ambient isotopies. By the definition of a $C_{n}$-move, it is easy to see that a $C_{n}$-equivalence implies a $C_{n-1}$-equivalence. Note that a $C_{2}$-move equals a {\it delta move} introduced by Matveev \cite{Matveev87} and Murakami-Nakanishi \cite{MN89} independently as illustrated in Fig. \ref{C2C3} (1), and a $C_{3}$-move equals a {\it clasp-pass move} introduced by Habiro \cite{Habiro93} as illustrated in Fig. \ref{C2C3} (2). A $C_{n}$-move is closely related to the {\it Vassiliev invariants} of oriented links \cite{Vass90}, \cite{BL93}, \cite{Natan95}, \cite{Stanford96}. It is known that if two oriented links are $C_{n}$-equivalent then they have the same Vassiliev invariants of order $\le n-1$, and specially for oriented knots, the converse is also true \cite{Habiro00}, \cite{Gusarov00}.

\begin{figure}[htbp]
      \begin{center}
\scalebox{0.475}{\includegraphics*{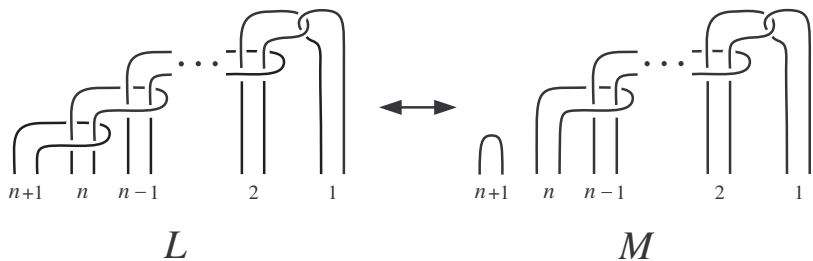}}
      \end{center}
   \caption{$C_{n}$-move ($n\ge 2$)}
  \label{Cnmove}
\end{figure} 
\begin{figure}[htbp]
      \begin{center}
\scalebox{0.45}{\includegraphics*{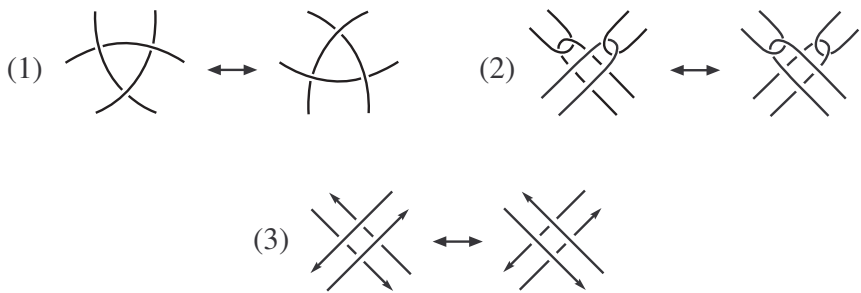}}
      \end{center}
   \caption{(1) Delta move, (2) Clasp-pass move, (3) Pass move}
  \label{C2C3}
\end{figure} 

Now let us generalize Theorem \ref{jw} to oriented links which are $C_{n}$-equivalent. 

\begin{Theorem}\label{divisible}
\begin{enumerate}
\item If two oriented links $L$ and $M$ are $C_{2}$-equivalent, then $V_{L}(t)-V_{M}(t)$ is divisible by $\left(t-1\right)^{2}\left(t^{2}+t+1\right)$.
\item For any integer $n\ge 3$, if two oriented links $L$ and $M$ are $C_{n}$-equivalent, then $V_{L}(t)-V_{M}(t)$ is divisible by $\left(t-1\right)^{n}\left(t^{2}+t+1\right)\left(t^{2}+1\right)$. 
\end{enumerate}
\end{Theorem}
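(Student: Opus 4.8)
The plan is to reduce, for each part, to the case where $M$ is obtained from $L$ by a single $C_{n}$-move (resp.\ $C_{2}$-move): since $C_{n}$-equivalence is generated by such moves and $V_{L}-V_{M}=\sum_{j}\left(V_{L_{j}}-V_{L_{j+1}}\right)$ along a defining sequence $L=L_{0},L_{1},\dots,L_{r}=M$, while the set of multiples of a fixed Laurent polynomial is closed under addition, it suffices to treat one move. A $C_{n}$-move with $n\ge 2$ preserves the number of components, so I fix $c=c(L)=c(M)$; then $V_{L}$ and $V_{M}$ lie in the same coset $t^{\delta}{\mathbb Z}\left[t^{\pm 1}\right]$ with $\delta\in\{0,1/2\}$, and after clearing the unit $t^{\delta-m}$ I regard $V_{L}-V_{M}$ as a genuine polynomial $p(t)\in{\mathbb Z}[t]$. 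The factors $(t-1)^{n}$, $\Phi_{3}(t)=t^{2}+t+1$ and $\Phi_{4}(t)=t^{2}+1$ are monic, primitive and pairwise coprime in ${\mathbb Q}[t]$, so by Gauss's lemma divisibility of $p$ by their product is equivalent to divisibility by each factor separately, and since $p$ has integral coefficients, vanishing at $\omega=e^{2\pi i/3}$ forces vanishing at $\bar\omega$ and vanishing at $i$ forces vanishing at $-i$. Thus part (1) reduces to the two assertions (a) $V_{L}-V_{M}$ vanishes to order $\ge 2$ at $t=1$ and (b) $V_{L}(\omega)=V_{M}(\omega)$, and part (2) reduces to (a) with order $\ge n$, (b), and (c) $V_{L}(i)=V_{M}(i)$ for $n\ge 3$.

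For (a) I would invoke the link between $C_{n}$-equivalence and the Vassiliev filtration recalled in the Introduction. Writing $t=e^{h}$ and expanding, the Birman--Lin/Bar-Natan theorem says that the coefficient $v_{m}(L)$ of $h^{m}$ in $V_{L}(e^{h})$ is a Vassiliev invariant of order $\le m$. Since $C_{n}$-equivalent links share all Vassiliev invariants of order $\le n-1$, we get $v_{m}(L)=v_{m}(M)$ for every $m\le n-1$, hence $V_{L}(e^{h})-V_{M}(e^{h})=O(h^{n})$. As $h\mapsto e^{h}$ is a local analytic isomorphism carrying $h=0$ to $t=1$ with $e^{h}-1=h\cdot(\text{unit})$, vanishing to order $n$ in $h$ is the same as $(t-1)^{n}\mid V_{L}-V_{M}$, giving $(t-1)^{2}$ in part (1) and $(t-1)^{n}$ in part (2).

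For (b) and (c) the conceptual point is that $V_{L}(\omega)$ and $V_{L}(i)$ are governed by classical evaluation formulas whose inputs are preserved by the relevant moves. By Lickorish--Millett, $V_{L}(\omega)$ is determined by $c$ and $\dim_{{\mathbb F}_{3}}H_{1}(\Sigma_{2}L;{\mathbb F}_{3})$, where $\Sigma_{2}L$ is the double branched cover, while $V_{L}(i)$ is determined by $c$ and the Arf invariant (and is $0$ when Arf is undefined). A delta ($C_{2}$) move changes $L$ within a fixed system of pairwise linking numbers, and one checks it leaves the $\Sigma_{2}$-data unchanged, giving (b) for all $n\ge 2$. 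The Arf invariant is a Vassiliev invariant of order $2$, hence unchanged by a $C_{3}$-move, giving (c); this is exactly why $t^{2}+1$ appears only for $n\ge 3$, as a $C_{2}$-move can alter an order-$2$ invariant and so need not preserve $V(i)$. Properness of $L$ (needed for Arf to be defined) is governed by linking numbers mod $2$, an order-$1$ datum, so it too is preserved, and when both links are non-proper both sides vanish.

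The main obstacle I anticipate is making (b) and (c) rigorous without tracking delicate covering-space data. Rather than follow the ${\mathbb F}_{3}$-homology of $\Sigma_{2}L$ through the move, I would prefer a self-contained argument that resolves the local $C_{2}$- and $C_{3}$-move tangles into a fixed ${\mathbb Z}\left[t^{\pm 1/2}\right]$-combination of closures by the skein relation and then specializes $t=\omega$ (resp.\ $t=i$), showing that the ``before'' and ``after'' combinations agree there. Organizing the several crossing changes and smoothings that constitute a $C_{2}$- and a $C_{3}$-move so that the identities $\Phi_{3}(\omega)=0$ and $\Phi_{4}(i)=0$ force the two finite skein expansions to coincide --- and handling the bookkeeping of orientations and base closures along the way --- is the technical heart of the argument; everything else is formal.
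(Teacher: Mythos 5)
Your overall strategy coincides with the paper's: factor the divisor into the coprime pieces $\left(t-1\right)^{n}$, $t^{2}+t+1$ and $t^{2}+1$, get the first from the Vassiliev filtration, and get the other two from evaluations at roots of unity. Your step (a) is sound and is, in substance, the paper's Lemma \ref{mainlem1} combined with the fact that $V_{L}^{(l)}(1)$ is a Vassiliev invariant of order $\le l$. Step (b), however, is argued incorrectly: the Lickorish--Millett formula involving $\dim_{{\mathbb F}_{3}}H_{1}$ of the double branched cover is the evaluation at the primitive \emph{sixth} root of unity $e^{\pi\sqrt{-1}/3}$, not at $\omega=e^{2\pi\sqrt{-1}/3}$, and your claimed check that a delta move ``leaves the $\Sigma_{2}$-data unchanged'' is false: the unknot and the trefoil are delta-equivalent (any two knots are, by Murakami--Nakanishi), yet that ${\mathbb F}_{3}$-dimension is $0$ for one and $1$ for the other. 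The correct statement at $\omega$ is Jones's evaluation $V_{L}(\omega)=(-1)^{r-1}$ (the paper's Lemma \ref{vlem} (2)), which depends only on the number of components; with it, (b) is immediate for any move preserving the component count, so this error is repairable.

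The genuine gap is step (c). The claim ``the Arf invariant is a Vassiliev invariant of order $2$, hence unchanged by a $C_{3}$-move'' is standard for knots, but the theorem is about links, and for links this claim is unjustified --- indeed it is the whole point. The Arf invariant of links is defined only on proper links, and properness is destroyed by crossing changes between distinct components, so Arf is not an invariant of all links and the theorem ``$C_{n}$-equivalence preserves Vassiliev invariants of order $\le n-1$'' cannot be applied to it as stated; its natural everywhere-defined surrogate is $V_{L}(\sqrt{-1})$ itself, which is exactly the quantity you are trying to control, so that route is circular. (For knots one has $\mathrm{Arf}(K)\equiv a_{2}(K)\pmod{2}$, but for proper links the analogous formulas involve Conway coefficients whose order grows with the number of components, so even heuristically the order-two count is special to knots.) The paper supplies precisely the missing argument geometrically in its Lemma \ref{mainlem2}: a $C_{3}$-move (clasp-pass move) is realized by a single pass move, pass moves preserve the Arf invariant of proper links by the appendix of \cite{MN89}, properness itself is preserved because $C_{2}$-moves preserve linking numbers, and Murakami's evaluation (Lemma \ref{vlem} (3)) then converts this into $V_{L}\left(\sqrt{-1}\right)=V_{M}\left(\sqrt{-1}\right)$ in both the proper and nonproper cases. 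Your proposed fallback --- resolving the local tangle by the skein relation and specializing at $t=\sqrt{-1}$ --- is exactly the part you leave undone and yourself call the ``technical heart,'' so as it stands the proposal does not prove part (2).
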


We remark that Theorem \ref{divisible} (1) was also observed in \cite[Theorem 2]{Ganzell14} for oriented knots by using the Kauffman bracket. Since any two oriented knots are $C_{2}$-equivalent \cite{MN89}, Theorem \ref{jw} is deduced from Theorem \ref{divisible} (1). 

In the case of $n\ge 3$, we show the maximality of $\left(t-1\right)^{n}\left(t^{2}+t+1\right)\left(t^{2}+1\right)$ as a divisor of the difference of Jones polynomials for oriented links which are $C_{n}$-equivalent as follows. Let $J_{n}$ and $K_{n}$ be two oriented knots as illustrated in Fig. \ref{Cnex}. Note that $J_{n}$ and $K_{n}$ are transformed into each other by a single $C_{n}$-move, see Fig. \ref{Cnex2}. Then we have the following. 

\begin{Theorem}\label{main_jones}
\begin{eqnarray*}
V_{J_{n}}(t) - V_{K_{n}}(t) 
= (-1)^{n+1}(t-1)^{n}\left(t^{2}+t+1\right)\left(t^{2}+1\right). 
\end{eqnarray*}
\end{Theorem}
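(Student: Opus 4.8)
The plan is to combine the divisibility already established in Theorem~\ref{divisible}~(2) with a recursion in $n$ that pins down the quotient. Applying Theorem~\ref{divisible}~(2) to the single $C_n$-move relating $J_n$ and $K_n$ (Fig.~\ref{Cnex}), I may write
\begin{equation*}
V_{J_n}(t)-V_{K_n}(t) = \left(t-1\right)^{n}\left(t^{2}+t+1\right)\left(t^{2}+1\right)\,P_n(t)
\end{equation*}
for some $P_n(t)\in\mathbb{Z}[t^{\pm 1}]$, so the whole content of the theorem is the claim $P_n(t)=(-1)^{n+1}$. The shape of the target, in which passing from $n-1$ to $n$ multiplies the answer by exactly $-(t-1)$, makes a recursion
\begin{equation*}
V_{J_n}(t)-V_{K_n}(t) = -\left(t-1\right)\left(V_{J_{n-1}}(t)-V_{K_{n-1}}(t)\right)
\end{equation*}
the natural vehicle, rather than computing $V_{J_n}$ and $V_{K_n}$ separately.

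To set up this recursion I would exploit the inductive structure of the $C_n$-move visible in Fig.~\ref{Cnmove}: the local tangle of a $C_n$-move is obtained from that of a $C_{n-1}$-move by adjoining one further clasp, and $J_n,K_n$ are built so that $J_{n-1},K_{n-1}$ reappear once that clasp is opened. Since $J_n$ and $K_n$ are \emph{not} related by a single crossing change, the clasp (which carries two crossings) must be resolved by applying the skein relation $t^{-1}V_{L_+}-tV_{L_-}=(t^{1/2}-t^{-1/2})V_{L_0}$ at its crossings. The key point to verify is that $J_n$ and $K_n$ differ only inside the part of the diagram touched by these resolutions, so that the oriented smoothings produce the \emph{same} links for $J_n$ and for $K_n$ and therefore cancel in the difference, while the remaining term reproduces the pair $(J_{n-1},K_{n-1})$; the combined skein coefficients surviving this cancellation should collapse to the single factor $-(t-1)$.

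For the base case I would compute $V_{J_3}(t)-V_{K_3}(t)$ directly from the diagrams of $J_3,K_3$ by repeated use of the skein relation; since $(-1)^{3+1}=1$, the target there is $\left(t-1\right)^{3}\left(t^{2}+t+1\right)\left(t^{2}+1\right)$. Feeding this into the recursion then gives $V_{J_n}(t)-V_{K_n}(t)=(-1)^{n+1}\left(t-1\right)^{n}\left(t^{2}+t+1\right)\left(t^{2}+1\right)$ for all $n\ge 3$ by induction.

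I expect the main obstacle to be the diagrammatic bookkeeping in the recursion step: one must choose the resolutions in the distinguishing clasp so precisely that one term manifestly returns the $(n-1)$-configuration while all smoothed terms are visibly common to $J_n$ and $K_n$, and then confirm that the surviving products of skein coefficients simplify to exactly $-(t-1)$, with the half-integer powers cancelling once the difference is formed. As an independent check on the constant—and a fallback should a clean clasp recursion prove elusive—I would bound the breadth of $V_{J_n}$ and $V_{K_n}$ using the crossing numbers of their diagrams: because $\left(t-1\right)^{n}\left(t^{2}+t+1\right)\left(t^{2}+1\right)$ already has breadth $n+4$, a matching bound on the breadth of the difference forces $P_n(t)$ to be a single monomial, after which comparing the lowest-degree coefficients identifies it as the constant $(-1)^{n+1}$.
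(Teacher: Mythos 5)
Your reduction to the recursion $V_{J_n}(t)-V_{K_n}(t)=-(t-1)\bigl(V_{J_{n-1}}(t)-V_{K_{n-1}}(t)\bigr)$ is consistent with the statement, but the mechanism you propose for proving it rests on a false premise, and this is a genuine gap. You assert that $J_n$ and $K_n$ ``differ only inside the part of the diagram touched by these resolutions, so that the oriented smoothings produce the same links for $J_n$ and for $K_n$ and therefore cancel in the difference.'' That is not what a $C_n$-move is: $K_n$ is the side of the move on which the entire chain of clasps is absent (retracted), so there are no corresponding crossings in $K_n$ at which to perform ``the same'' resolutions, and the smoothings of $J_n$ are not links shared with $K_n$. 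The most one gets from a single skein resolution is the following: since changing one clasp crossing of $J_n$ unhooks the clasper and yields $K_n$, the skein relation gives $V_{J_n}(t)-V_{K_n}(t)=(t-1)\bigl\{(t+1)V_{K_n}(t)+t^{1/2}V_{J_n^{0}}(t)\bigr\}$, where $J_n^{0}$ is a two-component link obtained by smoothing. Only \emph{one} factor of $(t-1)$ appears per resolution; the whole content of the theorem is now the claim that the bracketed combination equals $\pm(t-1)^{n-1}(t^2+t+1)(t^2+1)$, i.e.\ a hidden vanishing to order $n-1$ at $t=1$ of a sum of Jones polynomials of genuinely different, $n$-dependent links. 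No ``manifest cancellation of common terms'' produces this; establishing it requires exactly the kind of work the paper does, namely Kanenobu's formula (Lemma~\ref{lem1}), which packages the difference as $(t-1)^n$ times a signed sum of $2^{n-1}$ resolved links, followed by the isotopy arguments (Lemma~\ref{lem2} and Figs.~\ref{Cnex3}, \ref{Cnex4}) that kill all but four terms of that sum, and the torus-link evaluations of Lemma~\ref{lem3} that compute the survivors. Your recursion also silently assumes that opening the last clasp of $J_n$ returns the diagram of $J_{n-1}$, which is a nontrivial claim about the $n$-dependent closure pattern (note $K_n$ itself varies with $n$: $V_{K_3}\neq V_{K_4}$) and would need its own isotopy argument.

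The fallback is also not viable. The diagrams of $J_n$ and $K_n$ have on the order of $2n$ plus a constant crossings, so the breadth (span) bound on $V_{J_n}$, $V_{K_n}$, and hence on their difference, is roughly $2n$, which for large $n$ far exceeds the breadth $n+4$ of $(t-1)^n(t^2+t+1)(t^2+1)$; such a bound therefore cannot force $P_n(t)$ to be a monomial. (Breadth bounds on each polynomial separately also say nothing about cancellation in the difference, and identifying even a monomial $P_n$ would still require knowing an extreme coefficient of $V_{J_n}-V_{K_n}$, which you have no independent access to.) So neither branch of the proposal closes the argument: the divisibility from Theorem~\ref{divisible}~(2) and the base case $n=3$ match the paper, but the step that pins down the quotient as $(-1)^{n+1}$ --- the actual substance of Theorem~\ref{main_jones} --- is missing.
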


\begin{figure}[htbp]
      \begin{center}
\scalebox{0.4}{\includegraphics*{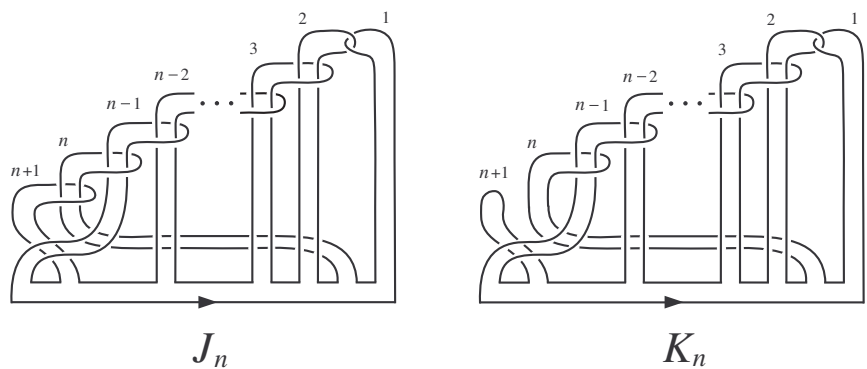}}
      \end{center}
   \caption{Oriented knots $J_{n}$ and $K_{n}$ ($n\ge 3$)}
  \label{Cnex}
\end{figure} 
\begin{figure}[htbp]
      \begin{center}
\scalebox{0.375}{\includegraphics*{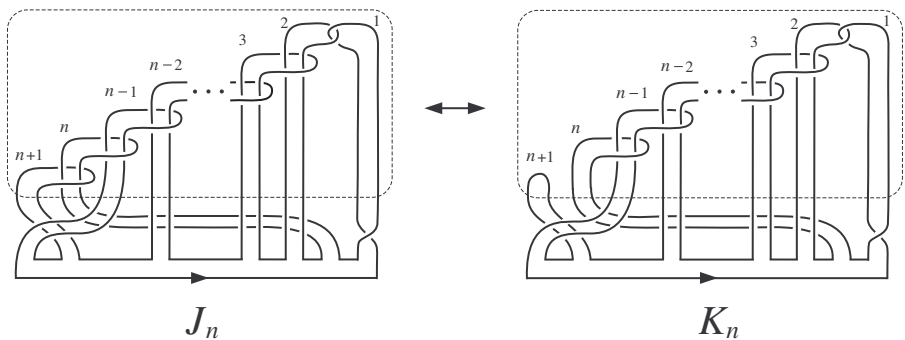}}
      \end{center}
   \caption{$J_{n}$ and $K_{n}$ are transformed into each other by a single $C_{n}$-move}
  \label{Cnex2}
\end{figure} 

In section $2$, we prove Theorem \ref{divisible} and give its applications to the study of the difference of Vassiliev invariants of order $\le n$ for two oriented links which are $C_{n}$-equivalent. In section $3$, we prove Theorem \ref{main_jones} without knowing $V_{J_{n}}(t)$ and $V_{K_{n}}(t)$ individually by applying Kanenobu's formula for the difference of Jones polynomials for two oriented knots which are transformed into each other by a single $C_{n}$-move (Lemma \ref{lem1}) and a $C_{n}$-move which does not change the knot type (Lemma \ref{lem2}).

\section{Proof of Theorem \ref{divisible}} 

We recall the following results about the special values of the Jones polynomial. Here an $r$-component oriented link $L$ is said to be {\it proper} if ${\rm lk}\left(K,L\setminus K\right)\equiv 0\pmod{2}$ for each component $K$ of $L$, where ${\rm lk}$ denotes the linking number, and the {\it Arf invariant} is a link invariant introduced in \cite{Robertello} defined for only proper links.

\begin{Lemma}\label{vlem} Let $L$ be an $r$-component oriented link. Then the followings holds. 
\begin{enumerate}
\item {\rm (\cite[(12.1)]{Jones87})} $V_{L}(1) = \left(-2\right)^{r-1}$. 

\item {\rm (\cite[(12.4)]{Jones87})} $\displaystyle V_{L}\left(e^{{2\pi \sqrt{-1}}/3}\right) = \left(-1\right)^{r-1}$. 

\item {\rm (Murakami \cite{Murakami86})} $V_{L}(\sqrt{-1}) = \left(\sqrt{-2}\right)^{r-1}\cdot (-1)^{{\rm Arf}(L)}$ if $L$ is proper, and $0$ if $L$ is nonproper, where ${\rm Arf}$ denotes the {\it Arf invariant}. 
\end{enumerate}
\end{Lemma}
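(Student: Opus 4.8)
The plan is to prove all three evaluations by the same two-step scheme. First I would specialize the defining skein relation at the root in question to obtain a linear relation among $V_{L_+}$, $V_{L_-}$ and $V_{L_0}$; then I would reduce the computation of $V_{L}$ at that root to the trivial $r$-component link $O_r$ by an induction on the number of crossings and the unknotting number, using that any diagram is carried to a diagram of $O_r$ by suitable crossing changes. The base case is supplied by $V_{O_r}(t) = \bigl(-t^{1/2}-t^{-1/2}\bigr)^{r-1}$, so in each case it remains only to evaluate $-t^{1/2}-t^{-1/2}$ at the relevant root and to check that the induction preserves the asserted value.

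For (1), set $t=1$. Then $-t^{1/2}-t^{-1/2} = -2$, so $V_{O_r}(1) = (-2)^{r-1}$, and the skein relation degenerates to $V_{L_+}(1) = V_{L_-}(1)$. Hence the value $V_{L}(1)$ is unchanged by crossing changes, and since every link is carried to $O_r$ by crossing changes we obtain $V_{L}(1) = V_{O_r}(1) = (-2)^{r-1}$ at once.

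For (2), set $t = \omega = e^{2\pi\sqrt{-1}/3}$, so that $\omega^{-1} = \omega^2$ and $\omega^{1/2}-\omega^{-1/2} = \sqrt{-1}\,\sqrt{3}$; here $-\omega^{1/2}-\omega^{-1/2} = -1$, giving $V_{O_r}(\omega) = (-1)^{r-1}$. Solving the specialized relation for $V_{L_+}$ would express the value at $\omega$ of any diagram in terms of those of a simpler diagram (one crossing switched, hence closer to $O_r$) and of the oriented smoothing $L_0$ (one fewer crossing), and the induction then reduces everything to trivial links. The one point requiring attention is the bookkeeping of the component count: the oriented smoothing changes $r$ by $\pm 1$, so I would have to check that the parity factor $(-1)^{r-1}$ is reproduced consistently at every step.

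For (3), set $t = \sqrt{-1}$. The specialized relation collapses to the clean identity
\begin{equation*}
V_{L_+}(\sqrt{-1}) + V_{L_-}(\sqrt{-1}) = -\sqrt{2}\,V_{L_0}(\sqrt{-1}),
\end{equation*}
and $-t^{1/2}-t^{-1/2}$ evaluates (for a fixed branch of $t^{1/2}$) to the constant appearing in the statement, so $V_{O_r}(\sqrt{-1})$ has the asserted form with $\mathrm{Arf}=0$. The heart of the matter is to match this recursion with the recursive behavior of the Arf invariant: $\mathrm{Arf}$ is defined only for proper links and obeys its own change/smoothing relation, while properness is governed by the parities of the linking numbers, which can be followed through the skein triple. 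I expect (3) to be the main obstacle, for two reasons. First, one must separate the proper and nonproper cases and verify that the evaluation vanishes exactly when $L$ is nonproper, so that both halves of the formula are matched simultaneously by a single induction. Second, the branch of $t^{1/2}$ (equivalently, the chosen meaning of $\sqrt{-1}$ and $\sqrt{-2}$) has to be fixed once and for all so that the unlink constant, the sign $(-1)^{\mathrm{Arf}}$, and the recursion remain mutually consistent; keeping these sign conventions aligned through the induction is the delicate part.
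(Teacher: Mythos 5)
The paper itself offers no proof of this lemma: all three evaluations are quoted with citations (Jones for (1) and (2), Murakami for (3)), so your attempt has to be judged on its own merits rather than against an internal argument. Parts (1) and (2) of your sketch are correct and are the standard skein inductions. In (1) the relation at $t=1$ degenerates to invariance under crossing changes and you are done. In (2) the bookkeeping you defer does close up, and it is worth seeing why: a smoothing changes the component count by exactly one, so whether $L_{0}$ has $r+1$ or $r-1$ components its inductive value is $(-1)^{r}$ in either case; then, writing $\omega=e^{2\pi\sqrt{-1}/3}$, the specialized relation gives $V_{L_{+}}(\omega)=\omega^{2}V_{L_{-}}(\omega)+\sqrt{-3}\,\omega\,V_{L_{0}}(\omega)=(-1)^{r-1}\omega\left(\omega-\sqrt{-3}\right)=(-1)^{r-1}\omega\cdot\omega^{2}=(-1)^{r-1}$, since $\omega-\sqrt{-3}=\omega^{2}$ and $\omega^{3}=1$. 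So the parity factor is reproduced and the induction (on crossing number, and on the number of crossing changes needed to trivialize) goes through.

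Part (3), however, contains a genuine gap: what you have written is a plan, not a proof, and everything that makes Murakami's theorem a theorem is located exactly in the two issues you flag and then leave unresolved. Concretely, you never state (much less prove) the recursion satisfied by the Arf invariant, and without it there is nothing to match against $V_{L_{+}}(\sqrt{-1})+V_{L_{-}}(\sqrt{-1})=-\sqrt{2}\,V_{L_{0}}(\sqrt{-1})$. The required input is Robertello--Murakami: for knots $K_{+},K_{-}$ differing at a self-crossing with smoothing $K_{0}=l_{1}\cup l_{2}$, one has ${\rm Arf}(K_{+})-{\rm Arf}(K_{-})\equiv {\rm lk}(l_{1},l_{2})\pmod{2}$, together with a case analysis for crossings between distinct components. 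Moreover, your phrase ``separate the proper and nonproper cases'' misrepresents the structure of the argument: properness is not inherited by all members of a skein triple (a self-crossing smoothing of a proper link is frequently nonproper), so the identity for proper links and the vanishing for nonproper links cannot be proved by separate inductions; they must be carried simultaneously as a single statement, and the vanishing half is itself nontrivial, not a formality about branches of $\sqrt{-1}$. As it stands, (3) --- the only part of the lemma whose proof is not a routine exercise --- remains unproven; the honest repair is either to cite Murakami as the paper does, or to import his intertwined induction in full.
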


For an oriented link $L$, we denote the $l$-th derivative at $1$ of the Jones polynomial $V_{L}(t)$ by $V_{L}^{(l)}(1)$. It is known that $V_{L}^{(l)}(1)$ is a Vassiliev invariant of order $\le l$ \cite{KN98}. Then we have the following.

\begin{Lemma}\label{mainlem1}
Let $L$ and $M$ be two oriented $r$-component links and $n$ an integer with $n\ge 2$. Then $V_{L}^{(l)}(1) = V_{M}^{(l)}(1)$ for $l=1,2,\ldots, n-1$ if and only if $V_{L}(t)-V_{M}(t)$ is divisible by $\left(t-1\right)^{n}\left(t^{2}+t+1\right)$. 
\end{Lemma}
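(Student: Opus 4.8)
The plan is to set $D(t) = V_{L}(t) - V_{M}(t)$ and to translate the divisibility statement into vanishing conditions at the roots of $(t-1)^{n}(t^{2}+t+1)$, then to read off those conditions directly from Lemma \ref{vlem}. Since $L$ and $M$ have the same number of components $r$, both $V_{L}$ and $V_{M}$ lie in $\mathbb{Z}\left[t^{\pm 1}\right]$ when $r$ is odd and in $t^{1/2}\mathbb{Z}\left[t^{\pm 1}\right]$ when $r$ is even; in either case $D$ equals a unit (a power of $t^{\pm 1/2}$) times an honest integral Laurent polynomial $Q$, and multiplication by this unit changes neither the order of vanishing at a nonzero point nor divisibility by $(t-1)^{n}(t^{2}+t+1)$. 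So I may pass to $Q$ and recall that, because $t-1$ and $t^{2}+t+1$ are coprime and both coprime to $t$, divisibility of $D$ by the product $(t-1)^{n}(t^{2}+t+1)$ is equivalent to the conjunction of (i) $D$ has a zero of order at least $n$ at $t=1$, and (ii) $D$ vanishes at the two primitive cube roots of unity $e^{\pm 2\pi\sqrt{-1}/3}$.

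The key step is that conditions (i) and (ii) are partly automatic. By Lemma \ref{vlem}(2), $V_{L}\!\left(e^{2\pi\sqrt{-1}/3}\right) = (-1)^{r-1} = V_{M}\!\left(e^{2\pi\sqrt{-1}/3}\right)$, so $D$ vanishes at $e^{2\pi\sqrt{-1}/3}$; since the integral Laurent polynomial $Q$ underlying $D$ has real coefficients, its vanishing at $e^{2\pi\sqrt{-1}/3}$ forces vanishing at the conjugate root $e^{-2\pi\sqrt{-1}/3}$ as well, so $(t^{2}+t+1)$ divides $Q$ and hence (ii) always holds. Similarly, by Lemma \ref{vlem}(1), $V_{L}(1) = (-2)^{r-1} = V_{M}(1)$, so $D(1) = 0$ and the order of vanishing at $t=1$ is always at least $1$.

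It then remains to rewrite (i). Because $D(1)=0$ is automatic, $D$ vanishes to order at least $n$ at $t=1$ if and only if the derivatives $D^{(l)}(1)$ vanish for $l = 1, 2, \ldots, n-1$, that is, if and only if $V_{L}^{(l)}(1) = V_{M}^{(l)}(1)$ for $l = 1, \ldots, n-1$. Combining this with the fact that (ii) holds unconditionally yields the equivalence in both directions: the derivative conditions for $l=1,\ldots,n-1$ hold exactly when $(t-1)^{n}(t^{2}+t+1)$ divides $D$.

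The main obstacle I anticipate is bookkeeping rather than conceptual. One must treat the half-integer powers carefully so that ``order of vanishing at $t=1$'' and ``divisibility by $(t-1)^{n}(t^{2}+t+1)$'' are legitimately transferred between $D$ and its integral part $Q$ (here invoking that the unit factor is nonzero at $t=1$ and at the cube roots of unity, and, if divisibility is meant over $\mathbb{Z}$, a Gauss-type primitivity argument for the monic-content factor $(t-1)^{n}(t^{2}+t+1)$). One must also confirm the conjugate-root passage in step two, so that vanishing at the single cube root of unity furnished by Lemma \ref{vlem} indeed forces the full quadratic factor $t^{2}+t+1$ to divide $D$.
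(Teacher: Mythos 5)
Your proposal is correct, and it reaches the conclusion by a genuinely more direct route than the paper. The paper proves the ``only if'' direction by induction on $n$: Lemma \ref{vlem} (1) and (2) give a factorization $V_{L}(t)-V_{M}(t)=\left(t^{3}-1\right)g(t)$ in the base case $n=2$, and at each inductive step the hypothesis $V_{L}^{(n-1)}(1)=V_{M}^{(n-1)}(1)$ is fed into the Leibniz-rule identity $V_{L}^{(n-1)}(1)-V_{M}^{(n-1)}(1)=(n-1)!\,\tilde{h}(1)$, where $V_{L}(t)-V_{M}(t)=\left(t-1\right)^{n-1}\tilde{h}(t)$, to peel off one more factor of $t-1$. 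You instead split the divisor into the coprime factors $\left(t-1\right)^{n}$ and $t^{2}+t+1$, observe that the quadratic factor is unconditionally present (Lemma \ref{vlem} (2) plus complex conjugation), and convert divisibility by $\left(t-1\right)^{n}$ into the statement that the order of vanishing at $t=1$ is at least $n$, which by Taylor expansion is exactly the vanishing of $D(1)$ (automatic, by Lemma \ref{vlem} (1)) together with $D^{(l)}(1)=0$ for $l=1,\ldots,n-1$. The mathematical inputs are identical --- both arguments run on Lemma \ref{vlem} (1), (2) and the derivative conditions at $t=1$ --- but your organization replaces the induction by a standard fact about orders of vanishing, and you make explicit two points the paper silently glides over: the passage from vanishing at the single root $e^{2\pi\sqrt{-1}/3}$ to divisibility by the full quadratic $t^{2}+t+1$, and the half-integer-power bookkeeping via Jones's parity theorem (that $V_{L}$ lies in ${\mathbb Z}\left[t^{\pm 1}\right]$ or $t^{1/2}{\mathbb Z}\left[t^{\pm 1}\right]$ according to the parity of $r$), without which divisibility in ${\mathbb Z}\left[t^{\pm 1/2}\right]$ would also demand vanishing conditions at $s=-1$ for $s=t^{1/2}$, conditions that the hypotheses at $t=1$ alone do not furnish. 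The price of your route is the appeal to Jones's parity theorem, which the paper never states (though its own divisibility claims implicitly need it or something equivalent); the payoff is a non-inductive proof in which the ring-theoretic transfers (unit factors, Gauss's lemma for the monic divisor) are all accounted for.
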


\begin{proof}
The `if' part is clear because $\left(t-1\right)^{n}$ divides $V_{L}(t)-V_{M}(t)$. We show the `only if' part by the induction on $n$. Assume that $n=2$. By Lemma \ref{vlem} (1) and (2), there exists a polynomal $g(t)\in {\mathbb Z}\left[t^{\pm 1/2}\right]$ such that 
\begin{eqnarray}\label{d1}
V_{L}(t) - V_{M}(t) = \left(t^{3}-1\right)g(t). 
\end{eqnarray}
Then by differentiating both sides in (\ref{d1}), we have 
\begin{eqnarray}\label{d2}
V_{L}^{(1)}(t) - V_{M}^{(1)}(t) = 3t^{2}g(t) + \left(t^{3}-1\right)g^{(1)}(t).  
\end{eqnarray}
Thus by the assumption and (\ref{d2}), we have $g(1)=0$. This implies that $V_{L}(t)-V_{M}(t)$ is divisible by $\left(t-1\right)^{2}\left(t^{2}+t+1\right)$. Next assume that $n\ge 3$ and $V_{L}^{(l)}(1) = V_{M}^{(l)}(1)$ for $l=0,1,\ldots, n-1$. By the induction hypothesis, it follows that there exists a polynomial $h(t)\in\left[t^{\pm 1/2}\right]$ such that 
\begin{eqnarray}\label{d3}
V_{L}(t) - V_{M}(t) = \left(t-1\right)^{n-1}\left(t^{2}+t+1\right)h(t). 
\end{eqnarray}
Let us denote $\left(t^{2}+t+1\right)h(t)$ by $\tilde{h}(t)$. Then by (\ref{d3}) and the assumption, we have 
\begin{eqnarray}\label{d4}
0 = V_{L}^{(n-1)}(1) - V_{M}^{(n-1)}(1) = (n-1)!\ \tilde{h}(1). 
\end{eqnarray}
Thus we have $0 = \tilde{h}(1) = 3h(1)$, namely $h(1) = 0$. This implies that $t-1$ divides $h(t)$, therefore we have the desired conclusion. 
\end{proof}

\begin{Remark}
{\rm For an $r$-component oriented link $L$, it is known that 
\begin{eqnarray*}
V_{L}^{(1)}(1) = -3(-2)^{r-2}{\rm Lk}(L)
\end{eqnarray*}
 if $r\ge 2$ and $0$ if $r=1$, where ${\rm Lk}$ denotes the total linking number, that is the summation of all pairwise linking numbers of $L$ \cite[(12.2)]{Jones87}. Thus Lemma \ref{mainlem1} implies Theorem \ref{jw} in case $r=1$, and ${\rm Lk}(L) = {\rm Lk}(M)$ if and only if $V_{L}(t)-V_{M}(t)$ is divisible by $\left(t-1\right)^{2}\left(t^{2}+t+1\right)$ in case $r\ge 2$. 
}
\end{Remark}

\begin{Lemma}\label{mainlem2}
For an integer $n\ge 3$, if two oriented links $L$ and $M$ are $C_{n}$-equivalent, then $V_{L}(t)-V_{M}(t)$ is divisible by $t^{2} + 1$. 
\end{Lemma}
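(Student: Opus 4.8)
The plan is to reduce divisibility by $t^{2}+1$ to the single evaluation at $\sqrt{-1}$. A $C_{n}$-move is supported in a ball and does not change the number of components, so $L$ and $M$ share a common number of components $r$, and hence both $V_{L}(t)$ and $V_{M}(t)$ lie in $t^{(r-1)/2}\mathbb{Z}[t^{\pm1}]$. Writing $V_{L}(t)-V_{M}(t)=t^{(r-1)/2}p(t)$ with $p(t)\in\mathbb{Z}[t^{\pm1}]$, it suffices to show $p(\sqrt{-1})=0$: since $p$ has integer coefficients, $p(\sqrt{-1})=0$ forces $p(-\sqrt{-1})=0$, so $(t-\sqrt{-1})(t+\sqrt{-1})=t^{2}+1$ divides $p(t)$ in $\mathbb{Z}[t^{\pm1}]$ and therefore divides $V_{L}(t)-V_{M}(t)$. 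Equivalently, I must prove $V_{L}(\sqrt{-1})=V_{M}(\sqrt{-1})$, and by composing the moves one at a time it is enough to assume $M$ is obtained from $L$ by a single $C_{n}$-move with $n\ge3$.

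By Lemma \ref{vlem} (3), the value $V_{L}(\sqrt{-1})$ is determined entirely by $r$, by whether $L$ is proper, and, in the proper case, by $\mathrm{Arf}(L)$. Since $r$ is already preserved, I would next show that properness is preserved. A $C_{n}$-equivalence with $n\ge2$ implies a $C_{2}$-equivalence, that is a delta-equivalence, and a delta move preserves every pairwise linking number \cite{MN89}; thus $L$ and $M$ have identical linking numbers, and $L$ is proper if and only if $M$ is. If both are nonproper then $V_{L}(\sqrt{-1})=0=V_{M}(\sqrt{-1})$ and we are done, so the remaining case is that both are proper.

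The crux is then to prove $\mathrm{Arf}(L)=\mathrm{Arf}(M)$. Since a $C_{n}$-equivalence with $n\ge3$ implies a $C_{3}$-equivalence, it suffices to show that a single $C_{3}$-move (clasp-pass move) preserves the Arf invariant of a proper link. For a knot this is immediate: $\mathrm{Arf}$ is the mod $2$ reduction of the coefficient $a_{2}$ of the Conway polynomial, $a_{2}$ is a Vassiliev invariant of order $2$, and a $C_{3}$-equivalence preserves all Vassiliev invariants of order $\le2$. For a proper link with several components I would instead use the characterization of the Arf invariant through pass-moves (Fig.\ \ref{C2C3} (3)), under which $\mathrm{Arf}$ is invariant, and check that a clasp-pass move is realized by pass-moves together with ambient isotopies—equivalently, that the clasp-pass move induces a change on a spanning surface leaving unaltered the $\mathbb{Z}/2$ quadratic form that defines $\mathrm{Arf}$.

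The main obstacle is precisely this link case. The knot argument does not transfer verbatim, because the coefficient of the Conway polynomial that detects the Arf invariant of an $r$-component link is not evidently of Vassiliev order $\le2$ once $r\ge2$, so the order-$2$ Vassiliev property of a $C_{3}$-equivalence cannot simply be quoted. I therefore expect the geometric route—tracking the effect of the clasp-pass move on a spanning surface and its associated $\mathbb{Z}/2$ quadratic form, or reducing it to pass-moves—to be the decisive and most delicate step, while the reductions in the first two paragraphs are routine.
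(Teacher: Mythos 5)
Your overall route coincides with the paper's: reduce divisibility by $t^{2}+1$ to the single equality $V_{L}(\sqrt{-1})=V_{M}(\sqrt{-1})$, note that properness is preserved because $C_{n}$-equivalence ($n\ge 2$) implies $C_{2}$-equivalence and $C_{2}$-moves preserve pairwise linking numbers, dispose of the nonproper case by Lemma \ref{vlem} (3), and reduce the proper case to invariance of the Arf invariant under clasp-pass moves. Your first paragraph is in fact more careful than the paper, which leaves implicit why one complex evaluation forces divisibility by $t^{2}+1$ inside ${\mathbb Z}\left[t^{\pm 1/2}\right]$; your factoring out of $t^{(r-1)/2}$ and the conjugation argument is a clean way to make that step honest.

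The step you explicitly leave open --- that a clasp-pass move preserves ${\rm Arf}$ of a proper link with several components --- is the one genuine gap in your write-up, but it does not require the spanning-surface/quadratic-form analysis you anticipate, nor any new geometric work. The paper closes it by quoting two known facts: a $C_{3}$-move, i.e.\ a clasp-pass move, can be realized by a single pass move (Fig. \ref{C2C3} (3); the pass move is Kauffman's \cite{Kauffman83}), and a pass move does not change the Arf invariant of a proper link, which is exactly the content of the appendix of Murakami--Nakanishi \cite{MN89} (extending Kauffman's pass-move characterization of ${\rm Arf}$ for knots). So your plan is sound and your instinct about where the difficulty lies is accurate; the resolution is that the ``delicate'' link case is already a theorem in the literature, and the proof is completed by citation rather than by the computation you were bracing for. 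Your remark that the knot-case argument via $a_{2}$ and Vassiliev invariants of order $\le 2$ does not transfer to links is correct, which is precisely why the paper argues through pass moves in the first place.
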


\begin{proof}
Let $L$ and $M$ be two $r$-component oriented links which are $C_{n}$-equivalent. Then $L$ and $M$ are $C_{3}$-equivalent. Note that a $C_{3}$-move $=$ a clasp-pass move can be realized by a single {\it pass move} \cite{Kauffman83} as illustrated in Fig. \ref{C2C3} (3), and a pass move does not change the Arf invariant of a proper link \cite[Appendix]{MN89}. If $L$ is proper, then $M$ is also proper because $L$ and $M$ also are $C_{2}$-equivalent and a $C_{2}$-move does not change the pairwise linking numbers. Then by Lemma \ref{vlem} (3), we have $V_{L}\left(\sqrt{-1}\right) = V_{M}\left(\sqrt{-1}\right)$. If $L$ is nonproper, then $M$ is also nonproper and by Lemma \ref{vlem} (3), we have $V_{L}\left(\sqrt{-1}\right) = 0 = V_{M}\left(\sqrt{-1}\right)$. 
\end{proof}

\begin{proof}[Proof of Theorem \ref{divisible}]
As we mentioned before, if two oriented $r$-component links $L$ and $M$ are $C_{n}$-equivalent then $V_{L}^{(l)}(1) = V_{M}^{(l)}(1)$ for $l=1,2,\ldots, n-1$. By combining this fact with Lemma \ref{mainlem1}, we have (1) in case $n=2$, and by combining this fact with Lemma \ref{mainlem1} and Lemma \ref{mainlem2}, we have (2). 
\end{proof}

As an application, we give alternative short proofs for two theorems shown by H. A. Miyazawa. Note that these theorems were proved by fairly combinatorial argument, that is, by making up a list of oriented $C_{n}$-moves carefully and checking the congruence for each of the cases. First we show the following as a direct consequence of Theorem \ref{divisible} (2).

\begin{Theorem}\label{mv1}{\rm (H. A. Miyazawa \cite[Theorem 1.5]{Miyazawa00})} 
For an integer $n\ge 3$, if two oriented links $L$ and $M$ are $C_{n}$-equivalent, then it follows that 
\begin{eqnarray*}
V_{L}^{(n)}(1) \equiv  V_{M}^{(n)}(1) \pmod{6\cdot n!}. 
\end{eqnarray*}
\end{Theorem}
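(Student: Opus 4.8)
The plan is to derive the congruence directly from the factorization supplied by Theorem \ref{divisible} (2). Since $L$ and $M$ are $C_{n}$-equivalent with $n\ge 3$, that theorem provides a Laurent polynomial $F(t)\in{\mathbb Z}\left[t^{\pm 1/2}\right]$ with
\begin{eqnarray*}
V_{L}(t)-V_{M}(t) = \left(t-1\right)^{n}\left(t^{2}+t+1\right)\left(t^{2}+1\right)F(t).
\end{eqnarray*}
First I would record that $F(t)$ genuinely has integer coefficients: writing $s=t^{1/2}$, the divisor becomes a monic polynomial in $s$ with integer coefficients, so dividing the integral Laurent polynomial $V_{L}(t)-V_{M}(t)$ by it again yields an element of ${\mathbb Z}\left[t^{\pm 1/2}\right]$. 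In particular $F(1)\in{\mathbb Z}$, and this integrality is the one point that must not be skipped.

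Next I would isolate the factor that vanishes to order $n$ at $t=1$. Setting $G(t)=\left(t^{2}+t+1\right)\left(t^{2}+1\right)F(t)$, we have $V_{L}(t)-V_{M}(t)=(t-1)^{n}G(t)$, where $G$ is smooth at $t=1$ (the half-integer powers present no difficulty near $t=1>0$). Applying the Leibniz rule gives
\begin{eqnarray*}
\left(V_{L}-V_{M}\right)^{(n)}(t) = \sum_{k=0}^{n}\binom{n}{k}\frac{n!}{(n-k)!}(t-1)^{n-k}\,G^{(n-k)}(t),
\end{eqnarray*}
and every summand with $k<n$ carries a positive power of $(t-1)$, hence vanishes at $t=1$; only the $k=n$ term survives. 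This yields $V_{L}^{(n)}(1)-V_{M}^{(n)}(1)=n!\,G(1)$.

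Finally I would evaluate $G(1)=\left(1+1+1\right)\left(1+1\right)F(1)=6\,F(1)$, so that
\begin{eqnarray*}
V_{L}^{(n)}(1)-V_{M}^{(n)}(1) = 6\cdot n!\cdot F(1).
\end{eqnarray*}
Since $F(1)$ is an integer, the right-hand side is an integer multiple of $6\cdot n!$, which is precisely the asserted congruence. The argument is essentially computational once Theorem \ref{divisible} (2) is in hand, and there is no substantial obstacle beyond confirming the integrality of $F(1)$ and verifying that at $t=1$ the two cyclotomic factors contribute exactly $6=3\cdot 2$ while the $(t-1)^{n}$ factor contributes exactly the $n!$.
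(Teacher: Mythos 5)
Your proposal is correct and takes essentially the same route as the paper's own proof: both invoke Theorem \ref{divisible} (2), group the cyclotomic factors with $F(t)$ into a single auxiliary polynomial, and differentiate $n$ times at $t=1$ to obtain $V_{L}^{(n)}(1)-V_{M}^{(n)}(1)=n!\cdot 6\cdot F(1)$. The only difference is expository: you make explicit the Leibniz-rule computation and the integrality of $F(1)$, both of which the paper leaves implicit.
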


\begin{proof}
Assume that two oriented links $L$ and $M$ are $C_{n}$-equivalent. Then by Theorem \ref{divisible} (2), there exists a polynomial $f(t)\in {\mathbb Z}\left[t^{\pm 1/2}\right]$ such that 
\begin{eqnarray}\label{mv3}
V_{L}(t) - V_{M}(t) = \left(t-1\right)^{n}\left(t^{2}+t+1\right)\left(t^{2}+1\right)f(t). 
\end{eqnarray}
Let us denote $\left(t^{2}+t+1\right)\left(t^{2}+1\right)f(t)$ by $\tilde{f}(t)$. Then by (\ref{mv3}), we have 
\begin{eqnarray*}
V_{L}^{(n)}(1) - V_{M}^{(n)}(1) = n! \cdot \tilde{f}(1) = n! \cdot 6 f(1). 
\end{eqnarray*}
Thus we have the result. 
\end{proof}

Miyazawa also showed the best possibility of Theorem \ref{mv1} by exhibiting two pairs of two oriented knots which are $C_{n}$-equivalent whose differences of the Jones polynomials do not equal $6\cdot n!$ but the greatest common divisor of them is $6\cdot n!$. The best possibility of Theorem \ref{mv1} may also be given by two oriented knots $J_{n}$ and $K_{n}$ in Theorem \ref{main_jones} whose difference of the Jones polynomials exactly equals $6\cdot n!$. Such an example was also observed by Horiuchi \cite{Horiuchi}. 

On the other hand, the {\it Conway polynomial} $\nabla_{L}(z)\in {\mathbb Z}\left[z\right]$ is an integral polynomial link invariant for an oriented link $L$ defined by the following formulae: 
\begin{eqnarray*}
\nabla_{O}(z) &=&1,\\
\nabla_{L_{+}}(z) - \nabla_{L_{-}}(z) &=& z\nabla_{L_{0}}(z), 
\end{eqnarray*}
where $\left(L_{+},L_{-},L_{0}\right)$ is a skein triple in Fig. \ref{skein} \cite{Conway}. 
Note that 
\begin{eqnarray}\label{det}
V_{L}(-1) = \nabla_{L}\left(-2\sqrt{-1}\right) 
\end{eqnarray}
and the absolute value of $V_{L}(-1)$ is known as the determinant of $L$. We denote the coefficient of $z^{l}$ in $\nabla_{L}(z)$ by $a_{l}(L)$. Then it is known that the Conway polynomial of an $r$-component oriented link $L$ is of the following form 
\begin{eqnarray}\label{conway}
\nabla_{L}(z) = \sum_{i\ge 0}a_{r+2i-1}(L) z^{r+2i-1}. 
\end{eqnarray}

It is known that $a_{l}(L)$ is a Vassiliev invariant of order $\le l$ \cite{Natan95}. Thus if two oriented links $L$ and $M$ are $C_{n}$-equivalent, then $a_{l}(L) = a_{l}(M)$ for $l\le n-1$. In the case of $l=n$, Miyazawa showed the following. Note that in the case of oriented knots, this had been obtained by Ohyama-Ogushi \cite{OO90}. 

\begin{Theorem}\label{mv2}{\rm (H. A. Miyazawa \cite[Theorem 1.3]{Miyazawa00})} 
For an integer $n\ge 3$, if two oriented links $L$ and $M$ are $C_{n}$-equivalent, then it follows that 
\begin{eqnarray*}
a_{n}\left(L\right) \equiv a_{n}\left(M\right) \pmod{2}. 
\end{eqnarray*}
\end{Theorem}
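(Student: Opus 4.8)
The plan is to read off the parity of $a_{n}(L)-a_{n}(M)$ from the single value $V_{L}(-1)-V_{M}(-1)$, converting it into Conway data by the determinant relation (\ref{det}) and controlling the precise power of $2$ it carries by means of Theorem \ref{divisible} (2). First I would dispose of a trivial case. Let $r$ be the common number of components of $L$ and $M$. By the form (\ref{conway}) of the Conway polynomial, $a_{l}$ can be nonzero only when $l\equiv r-1 \pmod 2$; hence if $n\not\equiv r-1\pmod 2$ then $a_{n}(L)=a_{n}(M)=0$ and there is nothing to prove. So I assume $n\equiv r-1\pmod 2$ throughout.

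Next I record the shape of the difference of Conway polynomials. Since each $a_{l}$ is a Vassiliev invariant of order $\le l$ and $L,M$ are $C_{n}$-equivalent, $a_{l}(L)=a_{l}(M)$ for all $l\le n-1$; combined with (\ref{conway}) this gives
\[
\nabla_{L}(z)-\nabla_{M}(z)=\sum_{k\ge 0}b_{k}\,z^{\,n+2k},\qquad b_{k}:=a_{n+2k}(L)-a_{n+2k}(M)\in{\mathbb Z}.
\]
Evaluating at $z=-2\sqrt{-1}$ and using $(-2\sqrt{-1})^{2}=-4$, every term carries the common factor $(-2\sqrt{-1})^{n}$, and the remaining sum satisfies $\sum_{k}b_{k}(-4)^{k}\equiv b_{0}\pmod 4$. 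Via (\ref{det}) the left-hand side is $V_{L}(-1)-V_{M}(-1)$, so the whole problem reduces to showing that the integer $\bigl(V_{L}(-1)-V_{M}(-1)\bigr)/(-2\sqrt{-1})^{n}$ is \emph{even}: that integer equals $\sum_{k}b_{k}(-4)^{k}\equiv b_{0}=a_{n}(L)-a_{n}(M)\pmod 4$, hence evenness of it forces evenness of $b_{0}$.

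Now I bring in Theorem \ref{divisible} (2): there is $f(t)\in{\mathbb Z}[t^{\pm 1/2}]$ with $V_{L}(t)-V_{M}(t)=(t-1)^{n}(t^{2}+t+1)(t^{2}+1)f(t)$. Setting $t=-1$ gives $(t-1)^{n}=(-2)^{n}$, $(t^{2}+t+1)=1$, $(t^{2}+1)=2$, so $V_{L}(-1)-V_{M}(-1)=(-1)^{n}2^{n+1}f(-1)$ and therefore $\bigl(V_{L}(-1)-V_{M}(-1)\bigr)/(-2\sqrt{-1})^{n}=2\,f(-1)(\sqrt{-1})^{-n}$. The crux is to see that $f(-1)(\sqrt{-1})^{-n}$ is an honest integer, which is exactly where the hypothesis $n\equiv r-1\pmod 2$ and the standard parity of the exponents of the Jones polynomial enter. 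Indeed $V_{L}(t)-V_{M}(t)$ lies in ${\mathbb Z}[t^{\pm 1}]$ when $r$ is odd and in $t^{1/2}{\mathbb Z}[t^{\pm 1}]$ when $r$ is even, and the explicit factor $(t-1)^{n}(t^{2}+t+1)(t^{2}+1)$ has only integer exponents; hence $f(t)$ inherits the same parity of exponents, which forces $f(-1)\in(\sqrt{-1})^{\,r-1}{\mathbb Z}$. Since $n\equiv r-1\pmod 2$ makes $(\sqrt{-1})^{\,r-1-n}=\pm 1$, we get $f(-1)(\sqrt{-1})^{-n}\in{\mathbb Z}$, so $2\,f(-1)(\sqrt{-1})^{-n}$ is even and the reduction in the previous paragraph closes.

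I expect the main obstacle to be bookkeeping rather than anything conceptual. One must track the exact power of $2$ produced at $t=-1$: the extra factor $(t^{2}+1)$, available only for $n\ge 3$, contributes precisely the single additional factor of $2$ that upgrades the statement from a tautology modulo $4$ to the evenness we actually need, which is why the result genuinely requires $n\ge 3$. Alongside this, the square-root ambiguities in $t^{1/2}$ and in $\sqrt{-1}$ must be pinned down with one consistent branch throughout (the same branch used in (\ref{det})), so that the parity argument on the exponents of $f$ and the identification $f(-1)\in(\sqrt{-1})^{\,r-1}{\mathbb Z}$ are unambiguous.
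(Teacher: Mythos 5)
Your proof is correct and follows essentially the same route as the paper's: dispose of the case $n\equiv r\pmod{2}$ via (\ref{conway}), then evaluate the factorization from Theorem \ref{divisible} (2) at $t=-1$, convert through (\ref{det}), and compare powers of $2$ against the expansion of $\nabla_{L}-\nabla_{M}$ at $z=-2\sqrt{-1}$. Your extra verification that $f(-1)\left(\sqrt{-1}\right)^{-n}$ is an honest integer (via the parity of the exponents of the Jones polynomial) simply makes explicit an integrality point that the paper's congruence modulo $2^{n+1}$ handles implicitly, working with Gaussian integers.
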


\begin{proof}
Let $L$ and $M$ be two $r$-component oriented links which are $C_{n}$-equivalent. If $n\equiv r\pmod{2}$, then  by (\ref{conway}) we have $a_{n}(L) = a_{n}(M) = 0$. Assume that $n\not\equiv r\pmod{2}$. Then by Theorem \ref{divisible} (2) and (\ref{det}), there exists a polynomial $W(t)\in {\mathbb Z}\left[t^{\pm 1/2}\right]$ such that 
\begin{eqnarray*}
(-1)^{n} \cdot 2^{n+1}\cdot W(-1) &=& V_{L}(-1) - V_{M}(-1) \\
&=& \nabla_{L}\left(-2\sqrt{-1}\right) - \nabla_{M}\left(-2\sqrt{-1}\right) \\
&=& \sum_{i\ge 1}\left\{a_{n+2i-2}(L) - a_{n+2i-2}(M)\right\}\cdot \left(-2\sqrt{-1}\right)^{n+2i-2}.
\end{eqnarray*}
This implies 
\begin{eqnarray*}
0 \equiv \left\{a_{n}(L) - a_{n}(M)\right\}\cdot 2^{n}\pmod{2^{n+1}}
\end{eqnarray*}
and therefore $a_{n}(L) - a_{n}(M)$ must be even. 
\end{proof}

Miyazawa showed that Theorem \ref{mv2} is also best possible. Furthermore, Ohyama-Yamada proved that for an integer $n\ge 2$, if two oriented knots $J$ and $K$ are transformed into each other by a single $C_{2n}$-move then $a_{2n}(J) - a_{2n}(K) = 0,\ \pm 2$ \cite[Theorem 1.3]{OY08}.

\section{Proof of Theorem \ref{main_jones}} 

We show three lemmas needed to prove the Theorem \ref{main_jones}. The first lemma is Kanenobu's formula for the difference of Jones polynomials for two oriented links which are transformed into each other by a single $C_{n}$-move. Let $L$ and $M$ be two oriented links which are transformed into each other by a single $C_{n}$-move as illustrated in Fig. \ref{Cnmove}. Let $c_{j1},c_{j2}\ (j=2,3,\ldots,n)$ and $c_{1}$ be crossings of $L$ as illustrated in Fig. \ref{Cnmove2}. We denote the sign of $c_{1}$ by $\varepsilon_{1}$ and the sign of $c_{j1}$ by $\varepsilon_{j}$ $(j=2,3,\ldots,n)$. Let $L\left[\delta_{2},\delta_{3},\ldots,\delta_{n}\right]$ be the link obtained from $L$ by smoothing the crossing $c_{1}$, smoothing the crossing $c_{j1}$ if $\delta_{j} = 1$, and changing the crossing $c_{j1}$ and smoothing the crossing $c_{j2}$ if $\delta_{j} = -1$ $(j=2,3,\ldots,n)$. Then the following formula holds.

\begin{Lemma}\label{lem1} {\rm (Kanenobu \cite[(4.10)]{Kanenobu04})} 
\begin{eqnarray*}
&& V_{L}(t) - V_{M}(t) \\
&=& 
\left(\prod_{i=1}^{n}\varepsilon_{i}\right)t^{\sum_{i=1}^{n}\varepsilon_{i}-\frac{n}{2}}\left(t-1\right)^{n}
\sum_{\delta_{2},\delta_{3},\ldots,\delta_{n}=\pm 1}\left(\prod_{j=2}^{n}\delta_{j}\right)V_{L\left[\delta_{2},\delta_{3},\ldots,\delta_{n}\right]}(t). 
\end{eqnarray*}
\end{Lemma}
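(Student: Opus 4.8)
The plan is to derive the formula directly from the defining skein relation by resolving, one at a time, the $2n-1$ distinguished crossings $c_1$ and $c_{j1},c_{j2}$ ($j=2,\dots,n$) of Fig.~\ref{Cnmove2}, and tracking how the difference $V_L-V_M$ collapses. The algebraic engine is a rearranged form of the skein relation: for a crossing of sign $\varepsilon$ in a diagram $D$, writing $D'$ for the diagram with that crossing switched and $D_0$ for its oriented smoothing, the relation $t^{-1}V_{L_+}-tV_{L_-}=(t^{1/2}-t^{-1/2})V_{L_0}$ of Fig.~\ref{skein} rearranges (treating the two signs separately) into the single identity
\[
V_D-t^{2\varepsilon}V_{D'}=\varepsilon\,t^{\varepsilon-\frac12}(t-1)\,V_{D_0}.
\]
This is the source of each factor $\varepsilon_i t^{\varepsilon_i-1/2}(t-1)$ in the prefactor: whenever a crossing is resolved, its smoothing carries exactly this weight while its switch carries the weight $t^{2\varepsilon}$.

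First I would apply this identity at $c_1$. Because $L$ and $M$ are identical outside the tangle of Fig.~\ref{Cnmove} and the $C_n$-move acts only inside it, the diagrams obtained by switching $c_1$ coincide, so the $t^{2\varepsilon_1}$-weighted switch terms cancel in $V_L-V_M$; only the smoothing of $c_1$ survives, contributing $\varepsilon_1 t^{\varepsilon_1-1/2}(t-1)$ and forcing $c_1$ to be smoothed in every term, exactly as in the definition of $L[\delta_2,\dots,\delta_n]$. I would then treat each clasp $c_j$ by a two-step resolution: applying the identity at $c_{j1}$, whose smoothing gives the $\delta_j=+1$ branch with weight $\varepsilon_j t^{\varepsilon_j-1/2}(t-1)$; in the remaining $t^{2\varepsilon_j}$-weighted branch $c_{j1}$ is switched, and I apply the identity once more at $c_{j2}$. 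Reading off from the clasp that $c_{j2}$ has sign $-\varepsilon_j$ once $c_{j1}$ is switched, the smoothing of $c_{j2}$ produces the $\delta_j=-1$ branch with total weight $t^{2\varepsilon_j}\cdot(-\varepsilon_j)t^{-\varepsilon_j-1/2}(t-1)=-\varepsilon_j t^{\varepsilon_j-1/2}(t-1)$, precisely the $\delta_j=+1$ weight scaled by $\delta_j=-1$; the leftover doubly-switched branch again yields diagrams on which $L$ and $M$ agree, and so cancels. Iterating over $j=2,\dots,n$ multiplies these weights and expands the difference over the $2^{n-1}$ states indexed by $(\delta_2,\dots,\delta_n)$, assembling $\bigl(\prod_{i=1}^n\varepsilon_i\bigr)t^{\sum_{i=1}^n\varepsilon_i-n/2}(t-1)^n\sum_\delta\bigl(\prod_{j=2}^n\delta_j\bigr)V_{L[\delta_2,\dots,\delta_n]}(t)$. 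Organizing the whole computation as an induction on $n$, with the delta-move case $n=2$ verified by hand on its three-crossing tangle, keeps the bookkeeping controlled.

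The main obstacle is geometric rather than algebraic: at each resolution I must certify that the branch I claim to discard really consists of diagrams on which $L$ and $M$ are isotopic, so that it cancels in $V_L-V_M$, and I must read the crossing signs---in particular $\mathrm{sign}(c_{j2})=-\varepsilon_j$ after switching $c_{j1}$---correctly and consistently off the tangle. Justifying these coincidences amounts to checking, from the precise pictures in Fig.~\ref{Cnmove} and Fig.~\ref{Cnmove2}, that switching the relevant clasp crossings trivializes the commutator-like $C_n$-tangle identically on both sides, which is where an explicit look at the figures is unavoidable. The remaining difficulty is purely clerical, namely keeping the half-integer powers of $t$, the product of signs, and the factors of $(t-1)$ aligned across all $2^{n-1}$ surviving states---and the rearranged identity above is designed precisely to automate that bookkeeping.
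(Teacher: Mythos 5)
First, a point of comparison: the paper offers no proof of this lemma at all --- it is quoted directly from Kanenobu \cite[(4.10)]{Kanenobu04} --- so your proposal has to stand on its own. Its algebraic core does: the rearranged skein identity $V_{D}-t^{2\varepsilon}V_{D'}=\varepsilon t^{\varepsilon-\frac{1}{2}}(t-1)V_{D_{0}}$ is correct for both signs, and your weights do reassemble into the stated prefactor and the signs $\prod_{j}\delta_{j}$. The structural problem is that the derivation does not close. In your scheme you resolve each distinguished crossing of $L$ and the corresponding crossing of $M$ in parallel and cancel switched branches against switched branches; even granting all of those geometric cancellations, what survives after the last stage is
\[
\left(\prod_{i=1}^{n}\varepsilon_{i}\right)t^{\sum_{i=1}^{n}\varepsilon_{i}-\frac{n}{2}}\left(t-1\right)^{n}\sum_{\delta_{2},\ldots,\delta_{n}=\pm 1}\left(\prod_{j=2}^{n}\delta_{j}\right)\left\{V_{L\left[\delta_{2},\ldots,\delta_{n}\right]}(t)-V_{M\left[\delta_{2},\ldots,\delta_{n}\right]}(t)\right\},
\]
a signed sum of differences of $L$-states and $M$-states, whereas the formula to be proved contains the $L$-states only. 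You never address why $\sum_{\delta}\left(\prod_{j}\delta_{j}\right)V_{M\left[\delta\right]}(t)=0$; that is a genuine missing idea, and it is precisely where the triviality of the tangle on the $M$-side must enter (for instance by showing that the $M$-states with $\delta_{j}=+1$ and $\delta_{j}=-1$ are pairwise isotopic, the same kind of cancellation this paper itself performs for $J_{n}\left[-1,\delta_{3},\ldots,\pm 1\right]$ in Fig.~\ref{Cnex3} and via Lemma~\ref{lem2}). The alternative reading of your first step --- that switching $c_{1}$ turns $L$ into $M$ itself --- fares worse: the switched branch carries the coefficient $t^{2\varepsilon_{1}}\neq 1$, leaving an uncancelled term $\left(t^{2\varepsilon_{1}}-1\right)V_{M}(t)$.

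Second, your sign bookkeeping at the clasps is reverse-engineered rather than derived. You need $\mathrm{sign}(c_{j2})=-\varepsilon_{j}$ so that the $\delta_{j}=-1$ weight $t^{2\varepsilon_{j}}\cdot(-\varepsilon_{j})t^{-\varepsilon_{j}-\frac{1}{2}}(t-1)$ equals $-\varepsilon_{j}t^{\varepsilon_{j}-\frac{1}{2}}(t-1)$, and you justify this as ``read off from the clasp once $c_{j1}$ is switched''; but switching $c_{j1}$ does not alter the sign of $c_{j2}$, and two crossings forming an honest clasp between the same pair of strands always have \emph{equal} signs (their closure is a Hopf link, so the signs sum to $\pm 2$). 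If $\mathrm{sign}(c_{j2})=+\varepsilon_{j}$, your $\delta_{j}=-1$ weight becomes $\varepsilon_{j}t^{3\varepsilon_{j}-\frac{1}{2}}(t-1)$ and the telescoping collapses. So the needed sign relation is a specific feature of the figure (the crossings $c_{j1},c_{j2}$ involve oppositely oriented strands of the band), and verifying it, together with the cancellation claims above, is the actual content of the lemma. Your sketch identifies the right skein-theoretic mechanism --- Kanenobu's own proof of the HOMFLY version proceeds by such resolutions --- but as written these are two real gaps, not clerical ones.
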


\begin{figure}[htbp]
      \begin{center}
\scalebox{0.45}{\includegraphics*{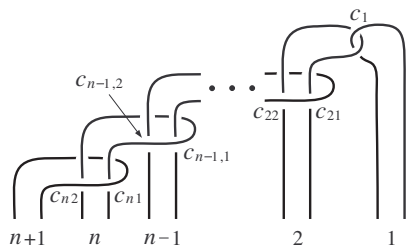}}
      \end{center}
   \caption{Crossings $c_{1}$, $c_{j1}$ and $c_{j2}$ of $L$ ($j=2,3,\ldots,n$)}
  \label{Cnmove2}
\end{figure} 

Next we show the second lemma. For an integer $n\ge 3$, let $L'_{n}$ and $M'_{n}$ be two links as illustrated in Fig. \ref{Cnambi}, where $T$ is an arbitrary $2$-string tangle which are same for both links. Note that $L'_{n}$ and $M'_{n}$ are transformed into each other by a single $C_{n}$-move. Then we have the following. 

\begin{Lemma}\label{lem2}
$L'_{n}$ and $M'_{n}$ are ambient isotopic. 
\end{Lemma}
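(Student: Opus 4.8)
The plan is to prove that $L'_n$ and $M'_n$ are ambient isotopic by exhibiting an explicit sequence of ambient isotopies that does not disturb the common tangle $T$. Since both links contain the identical $2$-string tangle $T$ and differ only in the part of the diagram involved in the $C_n$-move, the strategy is to show that the portion of $L'_n$ outside $T$ can be isotoped to the corresponding portion of $M'_n$ while leaving the two endpoints of $T$ (and hence $T$ itself) fixed throughout. In other words, I would treat $T$ as a black box and argue entirely about the tangle diagram on the complementary region.

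First I would recall the definition of $L'_n$ and $M'_n$ from Fig.~\ref{Cnambi} and identify precisely which strands carry the $C_n$-clasp pattern and how they close up through $T$. I expect that the clasp pattern in these particular links is arranged so that the chain of clasps can be ``unclasped'' by sliding strands past one another, i.e.\ the $C_n$-tangle in $L'_n$ is trivial (isotopic rel endpoints to the corresponding trivial tangle appearing in $M'_n$). Concretely, I would look for a sequence of Reidemeister moves, together with finger moves / strand slides, that cancels the clasps two at a time or collapses the whole tower of crossings, using the fact that the two arcs entering $T$ are free to be manipulated as long as their endpoints on $\partial T$ are pinned. The key geometric input is that in this configuration one strand bounds an embedded disk (or can be pulled through an unknotted loop), so the ambient isotopy is realized by an isotopy supported in a ball disjoint from $T$.

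The main obstacle I anticipate is purely diagrammatic bookkeeping: verifying that the proposed strand slides are legitimate ambient isotopies and not merely local moves that change the link type, and in particular checking that no strand of $T$ is ever crossed or swept through during the isotopy. Because the whole point of this lemma is that a single $C_n$-move can fail to change the link type when the ambient configuration is chosen as in Fig.~\ref{Cnambi}, the delicate part is confirming that the specific closure through $T$ really does make the two sides isotopic for \emph{every} tangle $T$, uniformly in $T$; this uniformity is what lets $T$ be arbitrary. I would therefore phrase the isotopy so that it is manifestly independent of the internal structure of $T$, depending only on the positions of its four boundary points.

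Once the explicit isotopy is produced, the proof is essentially a figure: I would present it as a sequence of diagrams (an analogue of Fig.~\ref{Cnex2}) showing $L'_n$ deformed step by step into $M'_n$, with $T$ held fixed in a fixed box at each stage, and remark that each arrow is an ambient isotopy. The conclusion $L'_n \cong M'_n$ then follows immediately, and this is exactly the input needed later so that, combined with Lemma~\ref{lem1}, the difference $V_{J_n}(t)-V_{K_n}(t)$ can be computed without determining $V_{J_n}(t)$ and $V_{K_n}(t)$ individually.
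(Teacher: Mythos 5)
Your proposal takes essentially the same approach as the paper: the paper's entire proof of this lemma is the single figure (Fig.~3.3) exhibiting an explicit step-by-step ambient isotopy carrying $L'_{n}$ to $M'_{n}$ in which the tangle $T$ is moved as a rigid box, so that the deformation is manifestly independent of the internal structure of $T$, exactly as you describe. The only caveat is that your write-up defers the actual content --- the sequence of diagrams unclasping the $C_{n}$-tangle --- to a figure you have not drawn, but that figure is likewise the entirety of the paper's own argument.
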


\begin{proof}
See Fig. \ref{Cnambi2}. 
\end{proof}

\begin{figure}[htbp]
      \begin{center}
\scalebox{0.375}{\includegraphics*{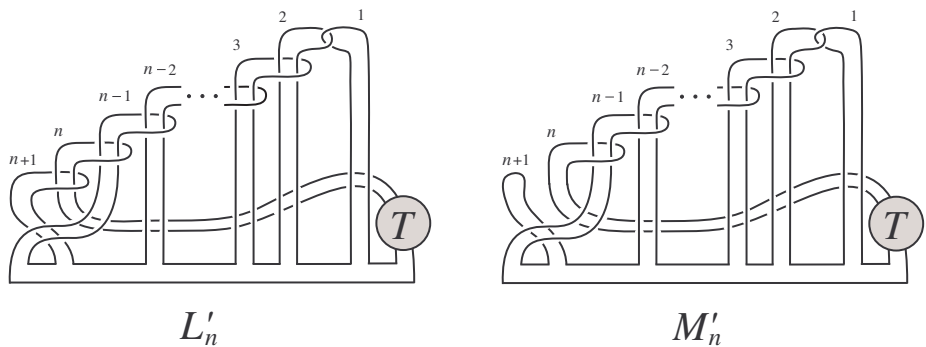}}
      \end{center}
   \caption{Two links $L'_{n}$ and $M'_{n}$ ($n\ge 3$)}
  \label{Cnambi}
\end{figure} 
\begin{figure}[htbp]
      \begin{center}
\scalebox{0.45}{\includegraphics*{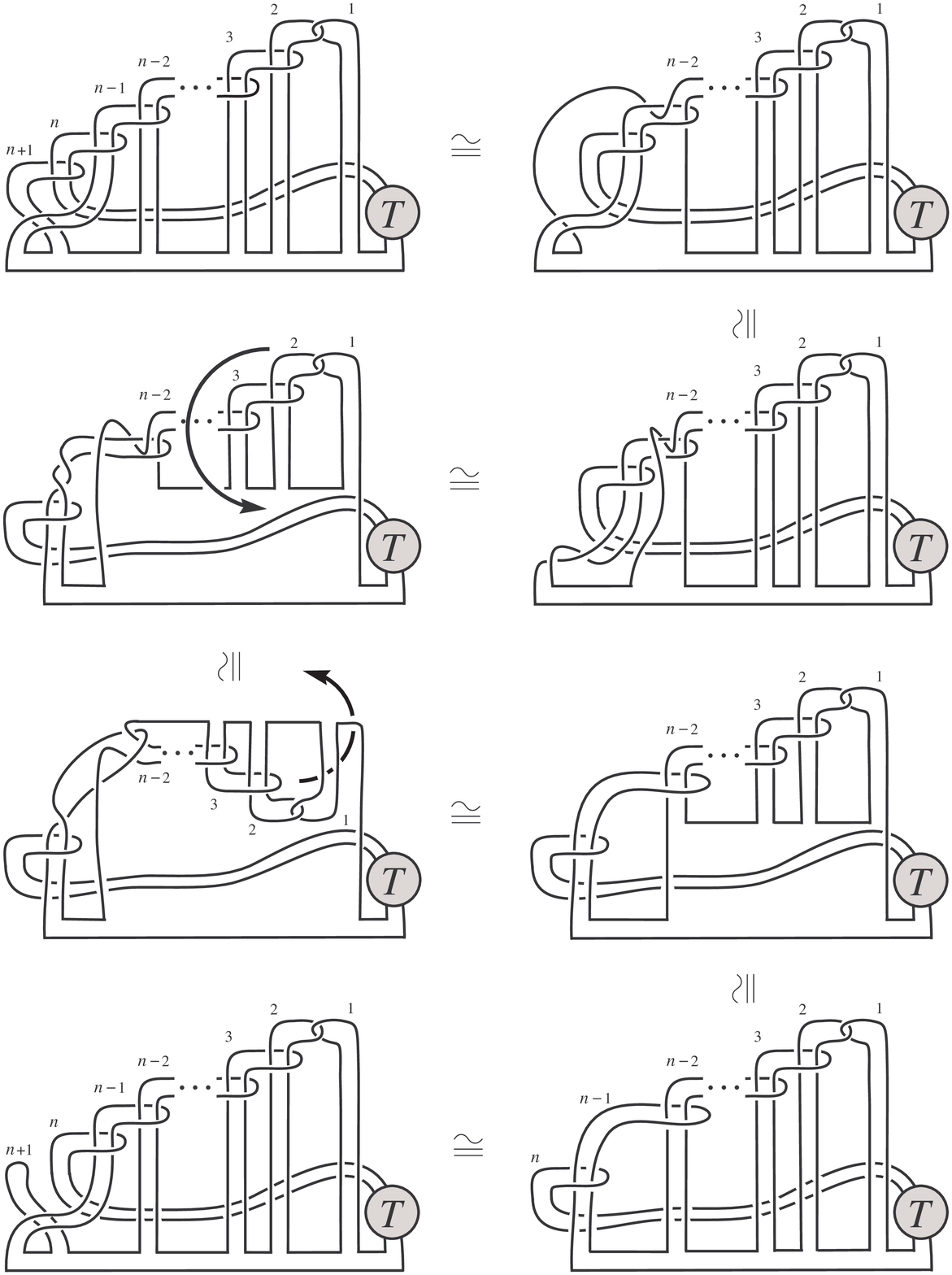}}
      \end{center}
   \caption{$L'_{n}$ and $M'_{n}$ are ambient isotopic}
  \label{Cnambi2}
\end{figure} 

Lemma \ref{lem2} gives a new example of a $C_{n}$-move which does not change the knot type. Such an example was first discovered by Ohyama-Tsukamoto \cite{OT99}. 

The third lemma is a calculation of the Jones polynomial for a $\left(2,-m\right)$-torus knot or link $N_{m}$ for a non-negative integer $m$ as illustrated in Fig. \ref{2mtorus} (1). Note that such a calculation has been already known, see \cite[pp. 37]{Kauffman3}, \cite[Lemma 2.1]{Landvoy98} for example. However we state a formula and give a proof for reader's convenience. 

\begin{Lemma}\label{lem3}
\begin{eqnarray*}
V_{N_{m}}(t) = 
\left(-t^{-\frac{1}{2}}\right)^{m}\left(-t^{\frac{1}{2}}-t^{-\frac{1}{2}}\right)+ \frac{\left(-t^{\frac{1-3m}{2}}\right)\left\{1-\left(-t\right)^{m}\right\}}{1+t}. 
\end{eqnarray*}
\end{Lemma}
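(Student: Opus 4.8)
The plan is to compute $V_{N_{m}}(t)$ by induction on $m$ using the defining skein relation. I would first record the two base cases: $N_{0}$ is the $2$-component trivial link and $N_{1}$ is the trivial knot, so that $V_{N_{0}}(t)=-t^{1/2}-t^{-1/2}$ and $V_{N_{1}}(t)=1$. A direct substitution of $m=0,1$ into the claimed formula confirms both: the second summand vanishes at $m=0$, and at $m=1$ it reduces to $-t^{-1}$, cancelling the extra $t^{-1}$ coming from the first summand.

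Next, for $m\ge 2$ I would apply the skein relation at one of the $m$ negative crossings of $N_{m}$, so that $N_{m}$ plays the role of $L_{-}$. Switching that crossing to a positive one produces a cancelling pair of crossings that is removed by a Reidemeister II move, giving $L_{+}=N_{m-2}$, while the oriented smoothing simply deletes the crossing from the $2$-braid, giving $L_{0}=N_{m-1}$. The relation $t^{-1}V_{L_{+}}(t)-tV_{L_{-}}(t)=(t^{1/2}-t^{-1/2})V_{L_{0}}(t)$ then yields the recursion
\[
V_{N_{m}}(t)=t^{-2}V_{N_{m-2}}(t)-t^{-1}\left(t^{1/2}-t^{-1/2}\right)V_{N_{m-1}}(t).
\]
Since this is a second-order linear recursion determined by the two base values, it suffices to check that the right-hand side of the asserted formula satisfies the same recursion and matches the base cases.

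For that verification I would split the formula into its two summands. The first summand $\left(-t^{-1/2}\right)^{m}\left(-t^{1/2}-t^{-1/2}\right)$ is geometric and obeys the recursion by inspection, using the relations $a_{m}=-t^{-1/2}a_{m-1}=t^{-1}a_{m-2}$. For the second summand the only identity needed is the telescoping relation $\frac{1-(-t)^{m}}{1+t}=\frac{1-(-t)^{m-1}}{1+t}+(-t)^{m-1}$, which turns the rational factor into something I can feed into the recursion; substituting and collecting the half-integer powers of $t$ then matches the two sides. The main obstacle is purely bookkeeping: correctly identifying the two resolutions as $N_{m-2}$ and $N_{m-1}$ (so that the signs and exponents in the recursion come out right, which relies on the crossings being negative and the two strands being coherently oriented), and then carrying the half-integer powers of $t$ through the algebra without slips. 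No conceptual difficulty remains once the recursion is in place, and the base-case computation already fixes every sign.
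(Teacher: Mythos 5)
Your proposal is correct and follows essentially the same route as the paper: the identical base cases ($N_{0}$ the trivial $2$-component link, $N_{1}$ the trivial knot) and the identical skein triple $\left(N_{m-2},N_{m},N_{m-1}\right)$ yielding the recursion $V_{N_{m}}(t)=t^{-2}V_{N_{m-2}}(t)-t^{-1}\left(t^{\frac{1}{2}}-t^{-\frac{1}{2}}\right)V_{N_{m-1}}(t)$. The only difference is one of execution, not of substance: the paper solves this recursion constructively (telescoping it into the first-order relation $V_{N_{m}}+t^{-\frac{1}{2}}V_{N_{m-1}}=t^{-\frac{3}{2}}\left\{V_{N_{m-1}}+t^{-\frac{1}{2}}V_{N_{m-2}}\right\}$ and summing a geometric series), whereas you verify that the stated closed form satisfies the recursion, which is equally valid since a second-order linear recursion is determined by its two initial values.
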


\begin{proof}
Note that $N_{0}$ is the trivial $2$-component link and $N_{1}$ is the trivial knot. Then we can check the formula directly for $m=0,1$. Assume that $m\ge 2$. Then we obtain the skein triple $\left(N_{m-2},N_{m},N_{m-1}\right)$ easily and therefore we have  
\begin{eqnarray}\label{v1}
t^{-1}V_{N_{m-2}}(t) - tV_{N_{m}}(t) 
= \left(t^{\frac{1}{2}}-t^{-\frac{1}{2}}\right)V_{N_{m-1}}(t). 
\end{eqnarray}
By (\ref{v1}), we have 
\begin{eqnarray}\label{v2}
V_{N_{m}}(t) + t^{-\frac{1}{2}}V_{N_{m-1}}(t) 
&=& t^{-\frac{3}{2}}\left\{V_{N_{m-1}}(t) + t^{-\frac{1}{2}}V_{N_{m-2}}(t)\right\}\\
&=& \left(t^{-\frac{3}{2}}\right)^{m-1}\left\{V_{N_{1}}(t) + t^{-\frac{1}{2}}V_{N_{0}}(t)\right\}\nonumber\\
&=& -t^{\frac{1-3m}{2}}. \nonumber
\end{eqnarray}
Then by (\ref{v2}), we have 
\begin{eqnarray*}
V_{N_{m}}(t) &=& -t^{-\frac{1}{2}}V_{N_{m-1}}(t) -t^{\frac{1-3m}{2}} \\
&=& \left(-t^{-\frac{1}{2}}\right)^{m}V_{N_{0}}(t) + \sum_{i=1}^{m}\left(-t^{\frac{1-3m}{2}}\right)(-t)^{i-1}\\
&=& \left(-t^{-\frac{1}{2}}\right)^{m}\left(-t^{\frac{1}{2}}-t^{-\frac{1}{2}}\right) + \frac{\left(-t^{\frac{1-3m}{2}}\right)\left\{1-\left(-t\right)^{m}\right\}}{1+t}.
\end{eqnarray*}
\end{proof}

\begin{proof}[Proof of Theorem \ref{main_jones}]
First we show in the case of $n=3$ and $4$. If $n=3$, by a calculation (with the help of \cite{kodamaknot}) we have 
\begin{eqnarray*}
V_{J_{3}}(t) &=& t^{-1}-2+4t-4t^{2}+5t^{3}-5t^{4}+3t^{5}-2t^{6}+t^{7},  \\
V_{K_{3}}(t) &=& t^{-1}-1+2t-2t^{2}+2t^{3}-2t^{4}+t^{5}. 
\end{eqnarray*}
Then we have 
\begin{eqnarray*}
V_{J_{3}}(t) - V_{K_{3}}(t) = (t-1)^{3}\left(t^{2}+t+1\right)\left(t^{2}+1\right). 
\end{eqnarray*}
If $n=4$, by a calculation we have 
\begin{eqnarray*}
V_{J_{4}}(t) &=& -t^{-2}+4t^{-1}-8+13t-15t^{2}+17t^{3}-16t^{4}+12t^{5}-8t^{6}+4t^{7}-t^{8},  \\
V_{K_{4}}(t) &=& -t^{-2}+4t^{-1}-7+10t-11t^{2}+12t^{3}-10t^{4}+7t^{5}-4t^{6}+t^{7}. 
\end{eqnarray*}
Then we have 
\begin{eqnarray*}
V_{J_{4}}(t) - V_{K_{4}}(t) = -(t-1)^{4}\left(t^{2}+t+1\right)\left(t^{2}+1\right). 
\end{eqnarray*}

From now on, we assume that $n\ge 5$. Since $\varepsilon_{i} = 1$ for any $i$, we have 
\begin{eqnarray}\label{j1}
 V_{J_{n}}(t) - V_{K_{n}}(t) 
&=& 
t^{\frac{n}{2}}\left(t-1\right)^{n}
\sum_{\delta_{2},\ldots,\delta_{n}=\pm 1}\left(\prod_{j=2}^{n}\delta_{j}\right)V_{J_{n}\left[\delta_{2},\ldots,\delta_{n}\right]}(t). 
\end{eqnarray}
If $\delta_{2} = -1$, we can see that ${J_{n}}\left[-1,\delta_{3},\ldots,\delta_{n-1},1\right]$ and ${J_{n}}\left[-1,\delta_{3},\ldots,\delta_{n-1},-1\right]$ are ambient isotopic, see Fig. \ref{Cnex3}. Thus by (\ref{j1}), we have 
\begin{eqnarray}\label{j2}
V_{J_{n}}(t) - V_{K_{n}}(t) 
&=& 
t^{\frac{n}{2}}\left(t-1\right)^{n}
\sum_{\delta_{3},\ldots,\delta_{n}=\pm 1}\left(\prod_{j=3}^{n}\delta_{j}\right)V_{J_{n}\left[1,\delta_{3},\ldots,\delta_{n}\right]}(t). 
\end{eqnarray}

\begin{figure}[htbp]
      \begin{center}
\scalebox{0.375}{\includegraphics*{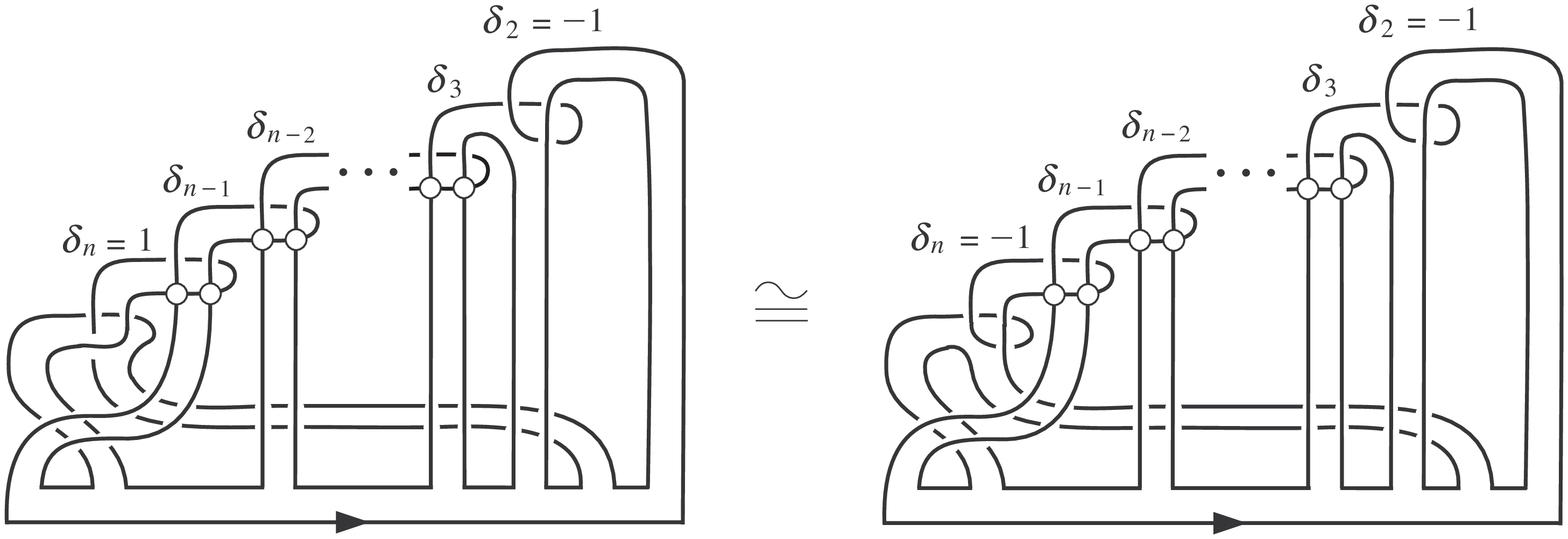}}
      \end{center}
   \caption{${J_{n}}\left[-1,\delta_{3},\ldots,\delta_{n-1},1\right]$ and ${J_{n}}\left[-1,\delta_{3},\ldots,\delta_{n-1},-1\right]$ are ambient isotopic}
  \label{Cnex3}
\end{figure} 

Let $k$ be an integer satisfying $3\le k\le n-2$. Note that $k$ is also satisfied with $3\le n-k+1\le n-2$. Then we can see that $J_{n}\left[1,\ldots,1,-1,\delta_{k+1},\ldots,\delta_{n}\right]$ is ambient isotopic to $L'_{n-k+1}\left[\delta_{k+1},\ldots,\delta_{n}\right]$ for some $2$-string tangle $T_{k}$, see Fig. \ref{Cnex4}, where $L'_{n-k+1}$ and $M'_{n-k+1}$ are corresponding knots as illustrated in Fig. \ref{Cnambi}. Then by Lemma \ref{lem2}, we have $L'_{n-k+1}$ and $M'_{n-k+1}$ are ambient isotopic and therefore 
\begin{eqnarray}\label{j3}
&& \sum_{\delta_{k+1},\ldots,\delta_{n}=\pm 1}\left(\prod_{j=k+1}^{n}\delta_{j}\right)V_{J_{n}\left[1,\ldots,1,-1,\delta_{k+1},\ldots,\delta_{n}\right]}(t)\\
&=& \left\{V_{L'_{n-k+1}}(t) - V_{M'_{n-k+1}}(t)\right\} \Big/ \left(-\prod_{i=k+1}^{n}\varepsilon_{i}\right)t^{-1+\sum_{i=k+1}^{n}\varepsilon_{i}-\frac{1}{2}(n-k)}\left(t-1\right)^{n-k} \nonumber\\
&=& 0. \nonumber
\end{eqnarray}
Thus by (\ref{j2}) and (\ref{j3}), we have 
\begin{eqnarray}\label{j4}
V_{J_{n}}(t) - V_{K_{n}}(t) &=& 
t^{\frac{n}{2}}\left(t-1\right)^{n}
\sum_{\delta_{n-1},\delta_{n}=\pm 1}\delta_{n-1}\delta_{n}
V_{J_{n}\left[1,\ldots,1,\delta_{n-1},\delta_{n}\right]}(t).
\end{eqnarray}

\begin{figure}[htbp]
      \begin{center}
\scalebox{0.35}{\includegraphics*{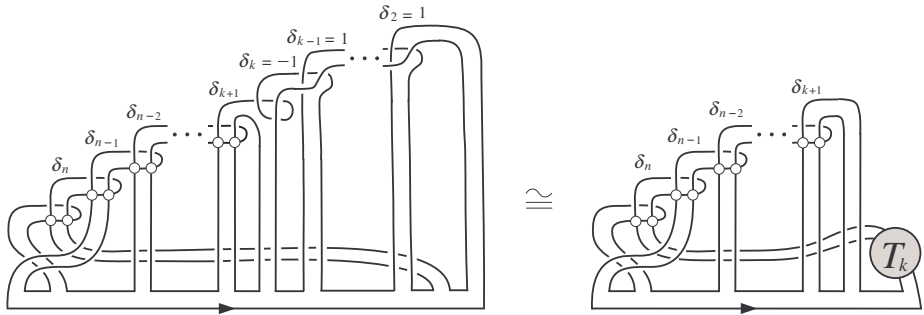}}
      \end{center}
   \caption{$J_{n}\left[1,\ldots,1,-1,\delta_{k+1},\ldots,\delta_{n}\right]$ is ambient isotopic to $L'_{n-k+1}\left[\delta_{k+1},\ldots,\delta_{n}\right]$ for some $2$-string tangle $T_{k}$}
  \label{Cnex4}
\end{figure} 

We can see easily that $J_{n}\left[1,\ldots,1,-1,-1\right]$ is ambient isotopic to $N_{n-3}$, $J_{n}\left[1,\ldots,1,-1,1\right]$ is ambient isotopic to the split union of $N_{n-4}$ and the trivial knot, and $J_{n}\left[1,\ldots,1,-1\right]$ is ambient isotopic to the connected sum of $N_{n-3}$, the Hopf link with linking number $1$ and the Hopf link with linking number $-1$. Thus we have 
\begin{eqnarray}
V_{J_{n}\left[1,\ldots,1,-1,-1\right]}(t) &=& V_{N_{n-3}}(t), \label{j5}\\
V_{J_{n}\left[1,\ldots,1,-1,1\right]}(t) &=& \left(-t^{\frac{1}{2}}-t^{-\frac{1}{2}}\right)V_{N_{n-4}}(t), \label{j6} \\
V_{J_{n}\left[1,\ldots,1,-1\right]}(t) &=& \left(-t^{\frac{5}{2}}-t^{\frac{1}{2}}\right)\left(-t^{-\frac{5}{2}}-t^{-\frac{1}{2}}\right)V_{N_{n-3}}(t). \label{j7}
\end{eqnarray}
Further, $J_{n}\left[1,\ldots,1\right]$ is ambient isotopic to the oriented link as illustrated in Fig. \ref{2mtorus} (2), where $m=n-4$. We obtain the skein triple $\left(N_{n-3},J_{n}\left[1,\ldots,1\right],N_{n-4}\right)$ by changing and smoothing the marked crossing in Fig. \ref{2mtorus}. Thus we have 
\begin{eqnarray}\label{j8}
V_{J_{n}\left[1,\ldots,1\right]}(t) 
= t^{-2}V_{N_{n-3}}(t) - t^{-1}\left(t^{\frac{1}{2}}-t^{-\frac{1}{2}}\right)V_{N_{n-4}}(t). 
\end{eqnarray}
By combining with (\ref{j4}), (\ref{j5}), (\ref{j6}), (\ref{j7}) and (\ref{j8}), we have 
\begin{eqnarray}\label{j9}
&& V_{J_{n}}(t) - V_{K_{n}}(t) \\ 
&=& 
t^{\frac{n}{2}}\left(t-1\right)^{n}
\left\{
\left(-1-t^{2}\right)V_{N_{n-3}}(t) + \left(t^{\frac{1}{2}}+t^{-\frac{3}{2}}\right)V_{N_{n-4}}(t)
\right\} \nonumber \\
&=& t^{\frac{n}{2}}\left(t-1\right)^{n}\left(1+t^{2}\right)
\left\{
-V_{N_{n-3}}(t) + t^{-\frac{3}{2}}V_{N_{n-4}}(t)
\right\}. \nonumber
\end{eqnarray}
Here, by Lemma \ref{lem3}, we also have 
\begin{eqnarray}\label{j10}
&& -V_{N_{n-3}}(t) + t^{-\frac{3}{2}}V_{N_{n-4}}(t)\\
&=& -\left(-t^{-\frac{1}{2}}\right)^{n-3}\left(-t^{\frac{1}{2}}-t^{-\frac{1}{2}}\right)- \frac{\left(-t^{\frac{10-3n}{2}}\right)\left\{1-\left(-t\right)^{n-3}\right\}}{1+t} \nonumber \\
&& + t^{-\frac{3}{2}}\left(-t^{-\frac{1}{2}}\right)^{n-4}\left(-t^{\frac{1}{2}}-t^{-\frac{1}{2}}\right) + \frac{\left(-t^{\frac{10-3n}{2}}\right)\left\{1-\left(-t\right)^{n-4}\right\}}{1+t} \nonumber \\
&=& (-1)^{n-2}t^{\frac{3-n}{2}}\left(-t^{\frac{1}{2}}-t^{-\frac{1}{2}}\right)
+ (-1)^{n-4}t^{\frac{1-n}{2}}\left(-t^{\frac{1}{2}}-t^{-\frac{1}{2}}\right) 
+ (-1)^{n-4}t^{\frac{2-n}{2}}\nonumber \\
&=& (-1)^{n+1}\left(t^{\frac{4-n}{2}}+t^{\frac{2-n}{2}}+t^{-\frac{n}{2}}\right) \nonumber \\
&=& (-1)^{n+1}t^{-\frac{n}{2}}\left(t^{2}+t+1\right). \nonumber
\end{eqnarray}
By (\ref{j9}) and (\ref{j10}), we have the desired conclusion. 
\end{proof}

\begin{figure}[htbp]
      \begin{center}
\scalebox{0.4}{\includegraphics*{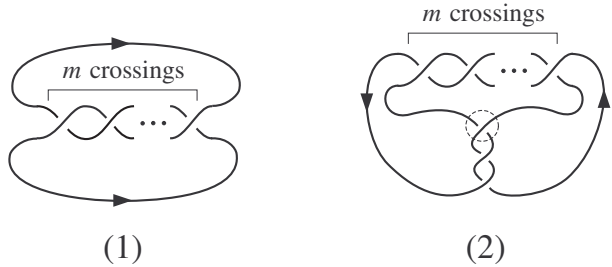}}
      \end{center}
   \caption{(1) $(2,-m)$-torus knot or link $N_{m}$, (2) A link ambient isotopic to $J_{n}\left[1,\ldots,1,1\right]$ if $m=n-4$}
  \label{2mtorus}
\end{figure} 

\section*{Acknowledgment}

The author is grateful to Professor Yoshiyuki Ohyama for informing him of Horiuchi's unpublished note \cite{Horiuchi}. 

{\normalsize
}


\begin{thebibliography}{99}

\bibitem{Natan95}
D. Bar-Natan, 
On the Vassiliev knot invariants, 
{\it Topology} {\bf 34} (1995), 423--472. 

\bibitem{BL93}
J. S. Birman and X-S. Lin, 
Knot polynomials and Vassiliev's invariants, 
{\it Invent. Math.} {\bf 111} (1993), 225--270. 

\bibitem{Conway}
J. H. Conway, 
An enumeration of knots and links, and some of their algebraic properties, 1970 {\it Computational Problems in Abstract Algebra (Proc. Conf., Oxford, 1967)}, pp. 329--358, {\it Pergamon, Oxford}.  


\bibitem{Ganzell14}
S. Ganzell, 
Local moves and restrictions on the Jones polynomial, 
{\it J. Knot Theory Ramifications} {\bf 23} (2014), 1450011, 8 pp. 

\bibitem{Gusarov00}
M. N. Gusarov, 
Variations of knotted graphs. The geometric technique of $n$-equivalence. (Russian) {\it Algebra i Analiz} {\bf 12} (2000), 79--125; translation in 
{\it St. Petersburg Math. J.} {\bf 12} (2001), 569--604. 

\bibitem{Jones87}
V. F. R. Jones, 
Hecke algebra representations of braid groups and link polynomials,  
{\it Ann. of Math. (2)} {\bf 126} (1987), 335--388. 

\bibitem{Habiro93}
K. Habiro, 
Clasp-pass moves on knots, unpublished, 1993. 

\bibitem{Habiro94}
K. Habiro, Aru karamime no kyokusyo sousa no zoku ni tuite, Master thesis, University of Tokyo, 1994.

\bibitem{Habiro00}
K. Habiro, 
Claspers and finite type invariants of links, 
{\it Geom. Topol.} {\bf 4} (2000), 1--83. 

\bibitem{Horiuchi}
S. Horiuchi, unpublished note. 

\bibitem{Kanenobu04}
T. Kanenobu, 
${\rm C}_{n}$-moves and the HOMFLY polynomials of links, 
{\it Bol. Soc. Mat. Mexicana (3)} {\bf 10} (2004), Special Issue, 263--277. 

\bibitem{KN98}
T. Kanenobu and Y. Miyazawa, 
HOMFLY polynomials as Vassiliev link invariants, 
{\it Knot theory (Warsaw, 1995),} 165--185, 
Banach Center Publ., {\bf 42}, {\it Polish Acad. Sci., Warsaw,} 1998. 

\bibitem{Kauffman83}
L. H. Kauffman, 
Formal knot theory, 
Mathematical Notes, {\bf 30}. {\it Princeton University Press, Princeton, NJ,} 1983.

\bibitem{Kauffman3}
L. H. Kauffman, Knots and physics. Third edition. Series on Knots and Everything, {\bf 1}. {\it World Scientific Publishing Co., Inc., River Edge, NJ,} 2001.

\bibitem{kodamaknot}
K. Kodama, {\it KNOT program}, 
{\verb|http://www.math.kobe-u.ac.jp/~kodama/knot.html|}



\bibitem{Landvoy98}
R. A. Landvoy, 
The Jones polynomial of pretzel knots and links, 
{\it Topology Appl.} {\bf 83} (1998), 135--147. 




\bibitem{Matveev87}
S. V. Matveev, 
Generalized surgeries of three-dimensional manifolds and representations of homology spheres (Russian), 
{\it Mat. Zametki} {\bf 42} (1987), 268--278, 345. 

\bibitem{Miyazawa00}
H. A. Miyazawa, 
$C_{n}$-moves and polynomial invariants for links, 
{\it Kobe J. Math.} {\bf 17} (2000), 99--117. 

\bibitem{Murakami86}
H. Murakami, 
A recursive calculation of the Arf invariant of a link,  
{\it J. Math. Soc. Japan} {\bf 38} (1986), 335--338. 


\bibitem{MN89}
H. Murakami and Y. Nakanishi, 
On a certain move generating link-homology, 
{\it Math. Ann.} {\bf 284} (1989), 75--89. 

\bibitem{OO90}
Y. Ohyama and Y. Ogushi, 
On the triviality index of knots, {\it Tokyo J. Math.} {\bf 13} (1990), 249--255. 

\bibitem{OT99}
Y. Ohyama and T. Tsukamoto, 
On Habiro's $C_{n}$-moves and Vassiliev invariants of order $n$, 
{\it J. Knot Theory Ramifications} {\bf 8} (1999), 15--26. 

\bibitem{OY08}
Y. Ohyama and H. Yamada, 
A $C_{n}$-move for a knot and the coefficients of the Conway polynomial, 
{\it J. Knot Theory Ramifications} {\bf 17} (2008), 771--785. 

\bibitem{Robertello}
R. Robertello, 
An invariant of knot cobordism, {\it Comm. Pure and Appl. Math.,} {\bf 18} (1965), 543--555. 

\bibitem{Stanford96}
T. Stanford, 
Finite-type invariants of knots, links, and graphs, 
{\it Topology} {\bf 35} (1996), 1027--1050. 


\bibitem{Vass90}
V. A. Vassiliev, 
Cohomology of knot spaces, 
{\it Theory of singularities and its applications,} 23--69, 
{\it Adv. Soviet Math.,} {\bf 1}, {\it Amer. Math. Soc., Providence, RI,} 1990.

\end{thebibliography}
\end{document}